\newcommand\N{\mathbb N}
\newcommand\Z{\mathbb Z}
\newcommand\R{\mathbb R}
\renewcommand\S{\mathbb S}
\newcommand\COP{\mathcal C}
\newcommand\PSD{\SYM^+}
\newcommand\MK{\mathcal{K}}
\newcommand\SYM{\mathcal{S}}
\newcommand\al\alpha
\newcommand\la\lambda
\newcommand\de\delta
\newcommand\De\Delta
\newcommand\ep\varepsilon
\newcommand\ph\varphi
\renewcommand\th\vartheta
\newcommand\si\Si
\newcommand\Si\Sigma
\newcommand\ta\tau
\newcommand\ps\psi
\newcommand\Ph\Phi
\newcommand\Ps\Psi
\theoremstyle{definition}
\newtheorem{thm}{Theorem}
\newtheorem{cor}[thm]{Corollary}
\newtheorem{pro}[thm]{Proposition}
\newtheorem{rem}[thm]{Remark}
\newtheorem{ex}[thm]{Example}
\newtheorem{lem}[thm]{Lemma}
\newtheorem{df}[thm]{Definition}
\newtheorem{conjecture}[thm]{Conjecture}
\title{Sum-of-squares certificates for copositivity via test states}
\author[Markus Schweighofer]{Markus Schweighofer}
\address{Fachbereich Mathematik und Statistik, Universität Konstanz, Germany}
\email{markus.schweighofer@uni-konstanz.de}
\author[Luis Felipe Vargas]{Luis Felipe Vargas}
\address{Centrum Wiskunde \& Informatica, Amsterdam, The Netherlands}
\email{lfv@cwi.nl}
\subjclass[2020]{Primary 05C31, 05C69, 13J30 15Bxx; Secondary 14P10, 90C22, 90C23, 90C27}
\date{October 19, 2023}
\keywords{copositive matrix, stability number, stable set, graph polynomial, sum of squares, nonnegative polynomial, Lasserre hierarchy, Reznick's Positivstellensatz, pure state, test state, semidefinite programming}
\begin{document}
\begin{abstract}
\begin{itemize}
In 1995, Reznick showed an important variant of the obvious fact that any positive semidefinite (real) quadratic form is a sum of squares of linear forms: 
If a form (of arbitrary even degree) is positive definite then it becomes a sum of squares of forms after being multiplied by a sufficiently high
power of the sum of its squared variables. If the form is just positive \emph{semi}definite instead of positive definite, this fails badly in general.
In this work, we identify however two classes of positive semidefinite
even quartic forms for which the statement continues to hold even though they have in general infinitely many projective
real zeros. The first class consists of all even quartic positive semidefinite forms in five variables. This provides a natural certificate for a matrix of size five being copositive and
answers positively a question asked by Laurent and the second author in 2022.
The second class consists of certain quartic positive semidefinite forms that arise from graphs and their stability number.
This shows finite convergence of a hierarchy of semidefinite approximations for the stability number of a graph proposed by de Klerk and Pasechnik in 2002.
In both cases, the main tool for the proofs is the method of pure states on ideals developed by Burgdorf, Scheiderer and the first author in 2012.
We hope to make this method more accessible by introducing the notion of a \emph{test state}.
\end{itemize}
\end{abstract}

\maketitle

\section{Introduction}

We denote by $\N:=\{1,2,3,\ldots\}$, $\N_0:=\{0\}\cup\N$, $\R$ and $\R_{\ge0}$ the sets of natural, nonnegative integer, real and nonnegative real numbers, respectively.
Throughout the article, $x_1,\ldots,x_n$ denote formal variables, \[x:=(x_1,\dots,x_n)\] is the tuple formed by them and $\R[x]:=\R[x_1,\ldots,x_n]$ is the ring of real polynomials in
these variables. We denote by
\[\Si:=\left\{\sum_{i=1}^mp_i^2\mid m\in\N,\;p_1,\ldots,p_m\in\R[x]\right\}\]
the set of all sums of squares of polynomials. Every sum of squares of polynomials is of course globally nonnegative, that is, every polynomial from $\Si$ is nonnegative on $\R^n$.
In 1888 Hilbert knew already that the converse is false except for $n=1$ \cite{hilb}, it fails in fact dramatically as shown by Blekherman in 2006 \cite{ble}.
In 1927, Artin solved Hilbert's 17th problem affirmatively, that is, he showed that for any globally nonnegative polynomial $p\in\R[x]$ there exists a non-zero $h\in\R[x]$ such that
$h^2p\in\Si$ \cite{art}.

A polynomial in $\R[x]$ is called a \emph{form} if it is homogeneous, that is, all of its monomials have equal degree. It is called \emph{even} if it is of the form
$p(x_1^2,\ldots,x_n^2)$ for some $p\in\R[x]$. It is called \emph{linear}, \emph{quadratic}, \emph{cubic}, \emph{quartic} if it is of degree $1,2,3,4$, respectively.
A form is called \emph{positive semidefinite} or \emph{positive definite} if it is (pointwise) nonnegative on $\R^n$ or positive on $\R^n\setminus\{0\}$, respectively.
It is a simple fact from linear algebra that every positive semidefinite quadratic form is a sum of squares of linear forms.
In 1995, Reznick showed the following important variant of both this fact and Artin's theorem \cite[Theorem 3.12]{rez}
(Reznick proved a theorem that is much stronger in several
respects, but for us this popular weaker version is the appropriate statement).

\begin{thm}[Reznick]\label{reznick}
Let $p$ be a positive definite form. Then, there exists $r\in\N_0$ such that 
\begin{align}\label{cert-reznick}
\left(\sum_{i=1}^nx_i^2\right)^rp\in \Si.
\end{align}
\end{thm}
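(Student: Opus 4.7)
The plan is to combine compactness of the unit sphere with an algebraic Positivstellensatz and then homogenize the resulting representation, producing a certificate of the form $s^r p \in \Si$ where $s := \sum_{i=1}^n x_i^2$ and $2d := \deg p$ (necessarily even, since $p$ is positive definite).

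Since $p$ is positive definite, it attains a strictly positive minimum on the compact unit sphere $\{s = 1\}$. Applying a Positivstellensatz on this compact real variety---Schmüdgen's, or equivalently Krivine--Stengle for the equality $s = 1$---I would obtain $\sigma \in \Si$ and $h \in \R[x]$ with
\[p = \sigma + h(1 - s).\]
Choose $N \ge d$ so large that $\deg \sigma \le 2N$ and $\deg h \le 2N - 2$. Now introduce an auxiliary variable $y$ and homogenize this identity to total degree $2N$ in $(x, y)$: writing $\sigma = \sum_j p_j^2$ and homogenizing each $p_j$ separately gives $\tilde \sigma = \sum_j \tilde p_j^2$, a sum of squares in $\R[x, y]$, and similarly I obtain $\tilde h \in \R[x, y]$ such that
\[y^{2N - 2d}\, p(x) = \tilde\sigma(x, y) + \tilde h(x, y)\,(y^2 - s).\]
Since the left-hand side is invariant under $y \mapsto -y$, I would average both $\tilde\sigma$ and $\tilde h$ over this involution; this preserves the sum-of-squares status of $\tilde\sigma$ and eliminates all odd powers of $y$ from it. Reducing modulo $(y^2 - s)$---and using the standard injectivity $\R[x] \hookrightarrow \R[x, y]/(y^2 - s)$ to pull the congruence back into $\R[x]$---yields an identity
\[s^{N - d}\, p = \sum_j A_j(x)^2 + s \cdot \sum_j B_j(x)^2\]
with $A_j, B_j \in \R[x]$. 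Because $s$ itself belongs to $\Si$, the right-hand side does too, and $r := N - d$ works.

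The main obstacle is precisely this homogenization-and-specialization step: moving from a non-homogeneous identity in $\R[x]$ to a homogeneous one in $\R[x, y]$, and then back, while preserving the SOS structure. The $y \mapsto -y$ symmetrization is essential---without it, reducing modulo $(y^2 - s)$ would leave a mixed $\R[x] + y\R[x]$ remainder that could not be interpreted as an identity in $\R[x]$---and so is the observation that $s$ itself is a sum of squares, which is what finally absorbs the term $s \sum_j B_j^2$ into $\Si$. Alternative routes to Reznick's theorem include his original Fourier-analytic argument via spherical harmonics and Gegenbauer polynomials, and the pure-state technology of Burgdorf--Scheiderer--Schweighofer that the rest of this paper develops and exploits in more delicate semidefinite situations.
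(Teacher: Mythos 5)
Your proof is correct. Note first that the paper does not prove this theorem at all: it is quoted from Reznick's 1995 paper, whose original argument is Fourier-analytic (spherical harmonics and the Fischer inner product) and even yields an explicit bound on $r$ in terms of $n$, $\deg p$ and the ratio of the maximum to the minimum of $p$ on the sphere. Your route is genuinely different: it is exactly the composite of the paper's Corollary~\ref{cassier} (Cassier/Schm\"udgen: $p>0$ on $\S^{n-1}$ implies $p\in M_{\S^{n-1}}=\Si+\R[x](1-\sum_i x_i^2)$) with the homogenization direction of Proposition~\ref{reznick-putinar-sphere} of de Klerk--Laurent--Parrilo, i.e.\ the ``vice versa'' the paper alludes to right after Corollary~\ref{cassier}. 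The key steps all check out: the substitution $x\mapsto x/y$ followed by multiplication with $y^{2N}$ turns $p=\sigma+h(1-s)$ into $y^{2N-2d}p=\tilde\sigma+\tilde h(y^2-s)$; the $y\mapsto-y$ symmetrization is indeed what makes the reduction modulo $y^2-s$ land in the direct summand $\R[x]\cdot 1$ of the free $\R[x]$-module $\R[x,y]/(y^2-s)$ (concretely, splitting each $\tilde p_j$ into even and odd parts in $y$ gives $s^{N-d}p=\sum_j A_j^2+s\sum_j B_j^2$); and $s\in\Si$ absorbs the second sum. Two small points you leave implicit: homogenizing each $p_j$ to degree $N$ requires $\deg p_j\le N$, which follows from $\deg\sigma\le 2N$ only via the standard fact that top-degree forms cannot cancel in a sum of squares over $\R$; and there is no circularity, since Cassier (1984), Schm\"udgen (1991) and Putinar (1993) all predate Reznick (1995) and none of their proofs invoke it. What you lose relative to Reznick's own proof is any effective control on $r$; what you gain is brevity, given the Positivstellensatz machinery the paper already sets up.
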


The hypothesis ``positive definite'' cannot in general be weakened to ``positive semidefinite'' in Theorem \ref{reznick}.
However, Scheiderer showed that every positive semidefinite form in three variables admits a nonnegativity certificate as in (\ref{cert-reznick})
\cite[Corollary 3.12]{sch1}. It is easy to show that this result does not extend to $n\ge 4$ (see for example \cite[Theorem 4.3]{cl} or \cite[Subsection 2.1]{vl}).

In this paper, we show the existence of a certificate as in (\ref{cert-reznick}) for certain positive semidefinite (but in general not positive definite) even quartic forms associated to
copositive matrices. We write $\SYM_n$ for the vector space of real symmetric matrices of size $n$. The set of \emph{copositive matrices} of size $n$
\begin{align}\label{def-COP}
\COP_n:=\{M\in \SYM_n\mid a^TMa\ge 0 \text{ for } a\in \R_{\ge0}^n\}
\end{align}
forms a (convex) cone in $\SYM_n$. While this cone looks innocently similar to the cone
\begin{align*}
\PSD_n:=\{M\in \SYM_n\mid a^TMa\ge 0 \text{ for } a\in \R^n\}\subseteq\COP_n
\end{align*}
of positive semidefinite matrices, it turns out that it maliciously captures many very difficult problems.
Indeed, it has been shown to have many applications in combinatorial optimization \cite{kp,bk}. Burer showed for example how to rewrite any quadratic optimization problem involving binary and continuous variables as a copositive optimization problem, i.e., as linear optimization problem over the cone $\COP_n$ \cite{bur}. Consequently, copositive optimization is hard in general. Moreover, the problem of determining whether a matrix is copositive is a co-NP-complete problem \cite{MK}. This motivates to study tractable certificates for copositivity. The certificates we will consider are based on the nonnegativity certificate from
\eqref{cert-reznick}.

Denoting $x^{\circ2}:=(x_1^2, \dots, x_n^2)$ and viewing this as a column vector, we call for each $M\in \SYM_n$,
\[(x^{\circ2})^TMx^{\circ2}\]
the \emph{(even) quartic form associated} to $M$. By means of this quartic form, we can obviously rewrite the definition \eqref{def-COP} of $\COP_n$ as 
\[\COP_n=\{M\in \SYM_n\mid (x^{\circ2})^TMx^{\circ2} \text{ is positive semidefinite}\}.\]

\begin{df}
Let $M\in \SYM_n$ be a symmetric matrix. We call $M$ \emph{Reznick-certifiable} if
\[\left(\sum_{i=1}^nx_i^2\right)^r(x^{\circ2})^TMx^{\circ2} \in \Si\]
for some $r\in \N_0$.
\end{df}

Using a different terminology, Reznick-certifiability of copositiveness was first considered by Parrilo in his thesis \cite{par}.
Later, it was used by de Klerk and Pasechnik who defined for $r\in\N_0$ the cones $\MK_n^{(r)}$ for approximating $\COP_n$ \cite[Section 4]{kp}:
\begin{align}\label{K}
\MK_n^{(r)}:=\Big\{M\in \SYM_n\mid\left(\sum_{i=1}^nx_i^2\right)^r(x^{\circ2})^TMx^{\circ2}\in \Si\Big\}.
\end{align} 
In other words, 
\[M\text{ is Reznick-certifiable} \iff M\in \bigcup_{r\in\N_0}\MK_n^{(r)}.\]
By Reznick's theorem (Theorem \ref{reznick}), we have that each $M\in\SYM_n$ satisfying
\begin{align}\label{strictly}
a^TMa>0\text{ for all }a\in\R_{\ge0}^n\setminus\{0\}
\end{align}
is Reznick-certifiable.

In this article, we will prove Reznick-certifiability for two classes of special copositive matrices: The first class is the set of all copositive matrices of size $5$
(for those that even satisfy \eqref{strictly} this follows already from Reznick's theorem). The second class consists of certain copositive matrices of size $n$
arising from a graph on $n$ vertices when one wants to compute the stability number of the graph via copositive optimization. These latter matrices never satisfy
\eqref{strictly}.

\subsection*{Copositive matrices of size $5$.}
 
The cone of $5\times 5$ copositive matrices has been much studied in the literature, and has been shown to play a special role. In 1962, it was shown by Diananda \cite{dia} that, for $n\le 4$, every $n\times n$ copositive matrix $M$ can be written as $M=P+N$, where $P$ is a positive semidefinite matrix and $N$ is a matrix with only nonnegative entries.
In 1976, Choi and Lam showed, for any $n\in\N$, that the cone $\MK_n^{(0)}$ consists precisely of the matrices that can be written as $P+N$ with $P$ positive semidefinite and $N$ entrywise nonnegative \cite[Lemma~3.5]{cl}. In particular, for $n\le 4$, every $n\times n$ copositive matrix lies in $\MK_n^{(0)}$ (that is, $\COP_n=\MK_n^{(0)}$).

This result does not extend to $n\ge 5$. Indeed, Hall and Newman \cite{hn} showed one year later that the \emph{Horn matrix}, defined as 
\begin{align}\label{horn}
H = \left(\begin{matrix}
1 & 1 & -1 & -1 & 1\cr
1 & 1 & 1 & -1 & -1\cr
-1 & 1 & 1 & 1 & -1\cr
-1 & -1 & 1 & 1 & 1 \cr
1 & -1 & -1 & 1 & 1
 \end{matrix}\right),
\end{align}
 is copositive and cannot be written as $P+N$, with $P$ positive semidefinite and $N$ entrywise nonnegative. This shows $\COP_5\ne\MK_5^{(0)}$. However, Parrilo showed that the Horn matrix satisfies
 \[H\in \MK_5^{(1)}\]
and therefore is Reznick-certifiable \cite[Page 68]{par}.  By an easy construction described in \cite[Lemma 15]{gl}, one can extend $H$ to a matrix
$H'$ of any wished size $n>5$ such that  $H'\in \MK_5^{(1)}\setminus\MK_5^{(0)}$. Hence $\COP_n\ne\MK_n^{(0)}$ for any $n\ge5$.

Laurent and the second author constructed, for any $n\ge6$, $n\times n$ copositive matrices that are not Reznick-certifiable \cite[Theorem 3]{lv2} (for example
the quartic form associated to the Horn matrix $H$ from \eqref{horn} when viewed as polynomial in $n\ge6$ instead of $5$ variables) and asked the question of whether every $5\times 5$ matrix is Reznick-certifiable \cite[Question 1]{lv2}. In this paper, we answer the latter question affirmatively. This is our first main result:

\begin{thm}\label{reznick5cop}
Every copositive matrix of size $5$ is Reznick-certifiable, in other words
\[\COP_5=\bigcup_{r\ge0}\MK_5^{(r)}.\]
\end{thm}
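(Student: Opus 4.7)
The plan is to apply the pure-state-on-ideal machinery developed by Burgdorf, Scheiderer and the first author, repackaged through the notion of test state promised in the abstract. Write $f := (x^{\circ 2})^T M x^{\circ 2}$ for the quartic form attached to $M \in \COP_5$. Theorem \ref{reznick} already handles the case when $f$ is positive definite, so we may restrict attention to the boundary case: some $a \in \R_{\ge 0}^5 \setminus\{0\}$ has $a^T M a = 0$. Copositivity then forces complementarity, $M a \ge 0$ and $a_i (Ma)_i = 0$ for each $i$, and the real zero set of $f$ in $\R^5$ is exactly the union of the sign-flip orbits of the coordinate square-roots of such copositive zeros $a$ of $M$.

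The next step is to pass to the dual picture. If $f$ is not Reznick-certifiable, a Hahn--Banach separation produces, for every $r \in \N_0$, a linear functional on forms of degree $2r+4$ that is nonnegative on $\big(\sum_i x_i^2\big)^r \Si$ but nonpositive on $f$. Normalizing and extracting an accumulation point yields a \emph{test state}: a positive linear functional $L$ on $\R[x]$ (or on a natural dehomogenization, say the sphere $\sum_i x_i^2 = 1$) with $L(f) \le 0$, which by its positivity must be supported on the real zero set of $f$. The theorem reduces to ruling out any such $L$ under the hypothesis $M \in \COP_5$.

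The core work is then a local analysis at the zeros. I would split into two cases according to the support $S$ of the underlying copositive zero $a$. When $|S| \le 4$, the principal submatrix $M_S$ is a copositive matrix of size at most four, and Diananda's theorem gives $M_S = P + N$ with $P \in \PSD$ and $N$ entrywise nonnegative; this furnishes a ``local'' $r = 0$ sum-of-squares structure at that zero, incompatible with a negative test state supported there. When $|S| = 5$, the zero is fully supported, and $M$ is forced to lie on a Horn-type stratum of $\partial \COP_5$; here one leverages Parrilo's explicit $r = 1$ certificate for the Horn matrix, together with the sign-flip symmetry that any even form imposes on $L$, to rule out the obstruction.

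The principal obstacle will be the fully-supported case: the Horn-type part of $\partial \COP_5$ is not a single matrix but a stratified family, closed under positive diagonal scaling and the symmetric-group symmetries of $H$, so one must establish a \emph{uniform} bound on the Reznick degree across this family. The test-state formalism, by reducing the problem to excluding one pure state at a time, is designed exactly to make such a stratification manageable. It is also here that the hypothesis $n = 5$ --- as opposed to $n \ge 6$, where counterexamples to Reznick-certifiability exist by \cite{lv2} --- must enter decisively, presumably through the short and explicit classification of extreme rays of $\COP_5$ due to Hildebrand.
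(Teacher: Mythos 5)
Your proposal assembles the right raw materials (Hildebrand's classification, Parrilo's certificate for the Horn matrix, a dual obstruction supported on the zero set), but the mechanism you describe has genuine gaps at its two load-bearing points. First, your construction of the ``test state'' is not sound as stated: separating $f$ from $(\sum_i x_i^2)^r\Si$ for each $r$ gives functionals on spaces of forms of unboundedly growing degree, and you do not say in what topology or under what normalization an accumulation point on all of $\R[x]$ exists; more seriously, a linear functional that is merely nonnegative on $\Si$ (or on $M_{\S^4}$) need not be integration against a measure, so the claim that it ``must be supported on the real zero set of $f$'' has no meaning, let alone a proof. This is exactly the difficulty the pure-state machinery of \cite{bss} is built to circumvent: the paper never takes limits of separating functionals, but instead works with a unit $u$ of the cone $M\cap I$ inside an ideal $I$, invokes the Effros--Handelman--Shen criterion (Theorem \ref{theorem-cone}) and the dichotomy theorem (Theorem \ref{dichotomy}), and checks positivity of each pure state directly (Theorem \ref{main-pure-states}). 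Second, your case analysis at the zeros does not close: for $|S|\le4$ you assert that a ``local $r=0$ sum-of-squares structure'' from Diananda is ``incompatible with a negative test state supported there'', but gluing local certificates at infinitely many zeros into a global one is precisely the hard step, and no local-to-global principle is cited or proved. Your worry about a \emph{uniform} Reznick degree across the Horn stratum is also a red herring: the theorem only asserts a union over $r$, so each matrix may carry its own exponent, and the paper explicitly notes that no uniform $r$ can exist since $\COP_5\ne\MK_5^{(r)}$ for every $r$.

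The paper's actual route is shorter and structurally different. The reduction of Laurent and Vargas (Theorem \ref{theorem-reduction}, from \cite{lv3}) disposes of Hildebrand's classification in one stroke and leaves only the matrices $DHD$ with $D$ positive definite diagonal; after rescaling variables and applying Corollary \ref{equiv-cert}, this becomes the statement $h:=(x^{\circ2})^THx^{\circ2}\in\Si+I\left(\sum_{i=1}^5 d_ix_i^2-1\right)$ (Theorem \ref{theorem-horn}), which is proved by Theorem \ref{main-pure-states} with $F=\{h\}$, $I=\R[x]h$, $f=h$ and $u=\left(\sum_i x_i^2\right)h$. The two substantive inputs are that $u\in\Si$ (Parrilo's certificate) and that $\left(\sum_i x_i^2\pm\ep\sum_i d_ix_i^2\right)h\in\Si$ for small $\ep>0$ (Lemma \ref{lemma-DHD}), which makes $u$ $F$-stably contained in $M$; the test-state condition (e) then follows from the one-line computation $1=\ph(u)=\left(\sum_i a_i^2\right)\ph(h)$, forcing $\ph(h)>0$. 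None of this apparatus appears in your sketch, so as it stands the proposal is not a proof.
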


To prove this, we will use the important reduction of Laurent and the second author \cite[Theorem 1.3]{lv3} showing that it suffices to show that every positive diagonal scaling
$DHD$ of the Horn matrix $H$ with a positive definite diagonal matrix $D\in\SYM_5$ is Reznick-certifiable (see Theorem~\ref{theorem-reduction}).
We briefly explain this reduction.
The extreme rays of $\COP_5$ have been fully described by Hildebrand \cite{hild} and up to conjugation with permutation matrices (i.e., up to multiplying with a permutation matrix
from one side and its transpose from the other side) they can be divided intro three categories:
The first category consists of matrices that belong to cone $\MK_n^{(0)}$. The second category arises from a special class of matrices $T(\psi)$ \cite[Page 1539]{hild}
by positive diagonal scalings \cite[Theorem 3.1]{hild} and has been shown to be Reznick-certifiable in \cite{lv3}. The third category consists of the positive diagonal scalings of the Horn matrix. Since both cones $\COP_n$ and $\MK^{(r)}_n$ are obviously invariant under conjugation with permutation matrices, we can disregard the conjugation by permutation
matrices. While $\COP_n$ is obviously invariant also under positive diagonal scaling, this is never the case for $\MK^{(r)}_n$ when $n\ge5$ and $r\ge1$ (since
otherwise \cite[Lemma 1]{ddgh} would imply $\MK_n^{(r)}= \MK_n^{(0)}$ which we have remarked above to be false). In contrast to conjugation by permutation matrices,
we cannot ignore the positive diagonal scaling in Hildebrand's result. Quite to the contrary, given a matrix in $M\in\COP_n\setminus\MK_n^{(0)}$ and an $r\in\N$, there exists
a positive definite diagonal matrix $D$ of size $n$ such that $DMD\not\in\MK_n^{(r)}$ \cite{ddgh}. In particular, there is no $r\in\N$ such that $\COP_5=\MK_5^{(r)}$, i.e., the
union of the right hand side of the equation in Theorem~\ref{reznick5cop} needs to be infinite. This follows also from a much stronger result in the recent work
\cite{bkt} where it is shown that $\COP_5$ is not even the projection of a spectrahedron (whereas each $\MK^{(r)}_n$ obviously is) \cite[Corollary 3.18]{bkt}.

The matrices in the first category are trivially Reznick-certifiable. The matrices in the second category have been shown to be Reznick-certifiable by Laurent and the second author
\cite[Theorem 2.3]{lv3}. There it is crucially used that the quadratic form $x^TT(\ps)x$ vanishes on only finitely many rays inside the orthant $\R_{\ge0}^5$. The quadratic form
$x^THx$ vanishes however on infinitely many rays inside this orthant \cite[Page 40]{lv3} which made Theorem \ref{reznick5cop} inaccessible by the methods used in \cite{lv3}.
In this article, we manage to handle the third category by using the theory of pure states on ideals from \cite{bss}.

\subsection*{Copositive matrices arising from graphs.}

The second class of copositive matrices for which we prove Reznick-certifiability arises from graphs. By a graph, we mean a simple undirected loopless finite graph, that is a graph
is a pair $G=(V,E)$ where $V$ is a finite set (the set of vertices) and $E$ is a set of two-element subsets of $V$ (the set of edges), i.e., $E\subseteq\{\{i,j\}\mid i,j\in V,i\ne j\}$.
Here, we will often suppose without loss of generality that $V=[n]:=\{1,\ldots,n\}$ for some $n\in\N_0$. In this case, the adjacency matrix $A_G=(a_{ij})_{1\le i,j\le n}\in\SYM_n$
is defined by $a_{ij}=1$ if $\{i,j\}\in E$ and $a_{ij}=0$ if $\{i,j\}\not\in E$.
 A subset of vertices $S\subseteq V$ is \emph{stable} in $G$ if $\{i,j\}\notin E$ for all $i,j\in S$. The stability number of $G$, denoted by $\al(G)$, is the maximum cardinality of a stable set in $G$. Computing $\al(G)$ is an NP-hard problem in general \cite{kar}. De~Klerk and Pasechnik \cite[Corollary 2.4]{kp} proposed the following formulation of $\al(G)$ as an optimization problem over the copositive cone $\COP_n$, which can easily be deduced from \cite[Theorem 1]{ms}:
\begin{align}\label{alpha-cop}
\al(G)=\min\{ t\in\R \mid t(A_G+I)-J\in \COP_n\},
\end{align}
where $A_G$ is the adjacency matrix of $G$, and $I$ and $J$ are the identity matrix and the all ones matrix of size $n$, respectively.
By taking $t=\al(G)$, we obtain that the {\em graph matrix} of $G$,
\[M_G:=\al(G)(A_G+I)-J\]
is copositive. Thus, the {\em graph polynomial}
\[f_G:=(x^{\circ2})^TM_Gx^{\circ2}\]
is nonnegative. As an illustration, when $G$ is the 5-cycle, the graph matrix $M_G$ is precisely the Horn matrix $H$. Our second main result shows that the graph matrix of any graph is always Reznick-certifiable.

\begin{thm}\label{theorem-alpha}
For any graph $G$, the matrix $M_G$ is Reznick-certifiable, i.e., there exists $r\in \N_0$ such that 
\[\left(\sum_{i=1}^nx_i^2\right)^rf_G\in \Si.\]
\end{thm}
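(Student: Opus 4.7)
The plan is to apply the test-state formalism (based on the pure-state machinery of \cite{bss}) introduced earlier in the paper. Reznick-certifiability of $M_G$ is equivalent, after passing to the sphere $S^{n-1}$, to membership of $f_G$ in the quadratic module generated by $\Si$ modulo the sphere ideal. If no $r$ worked, a standard duality argument would produce a test state $\ph$: a linear functional nonnegative on every $(\sum_i x_i^2)^r\Si$, satisfying $\ph(f_G)\le 0$. Since $f_G\ge 0$ globally, in fact $\ph(f_G)=0$, and the test-state analysis pins $\ph$ down to be concentrated, in the GNS sense, on the real zero set $V\subseteq S^{n-1}$ of $f_G$.

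Next I would describe $V$ combinatorially. Setting $a:=x^{\circ 2}\in\R_{\ge 0}^n$, one has $f_G(x)=0$ iff $a^TM_Ga=0$, i.e.\ iff $a$ attains equality in the Motzkin-Straus inequality behind \eqref{alpha-cop}. The null locus of $M_G$ in $\R_{\ge 0}^n$ decomposes into isolated zeros corresponding to uniform distributions on maximum stable sets $S$ (that is, $a_i=1/|S|$ for $i\in S$ and $a_i=0$ otherwise), together with positive-dimensional families arising from richer combinatorial configurations (as exemplified by the one-parameter family of zeros of the Horn matrix supported on three vertices). Pure test states split accordingly: point evaluations at the discrete zeros, which are handled by a local Reznick-type argument (or, for configurations reducing to the Horn matrix, by Parrilo's certificate \cite{par}); and derivation-type states tangent to the positive-dimensional components, which are the genuinely new difficulty.

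The main obstacle is ruling out these derivation-type test states on the positive-dimensional components of $V$. The strategy is to show that any such state restricts to a test state obstructing Reznick-certifiability of a strictly smaller graph polynomial, closing the argument by induction on $|V(G)|$. Concretely, if the underlying zero has support $T\subsetneq V(G)$, one expects the higher moments of $\ph$ to encode a test state on the coordinate ring of the variables $\{x_i:i\in T\}$ compatible with $f_{G[T]}$, after a Hessian computation showing that the quadratic approximation of $f_G$ at the zero is controlled by the graph polynomial of an induced subgraph (or a closely related construction). The delicate step is verifying that $\al(G[T])$ matches the local value coming from $\ph$ and that interactions with vertices outside $T$ do not spoil the reduction; the base case is graphs with only isolated zeros on the sphere, where Reznick's theorem applies directly.
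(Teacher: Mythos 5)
Your proposal has the right general flavor (test states localized at the zeros of $f_G$ on the sphere, plus an induction on the number of vertices), but as written it contains several genuine gaps, and the paper's proof is organized quite differently. First, your base case is false: Reznick's theorem requires positive definiteness, and $f_G$ vanishes at the characteristic vector of every maximum stable set, so it never applies directly to a nonempty graph even when the zeros are isolated. Second, the machinery of Theorem~\ref{main-pure-states} is not a separation argument: one does not obtain a test state from the failure of membership; rather, one must first exhibit an ideal $I=I(F)$ and a ``role-model'' $u\in I\cap M$ with $uM\subseteq M$ that is $F$-stably contained in $M$, and then show that every $\ph\in T(I,M,u)_a$ satisfies $\ph(f)>0$. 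Your sketch never produces a candidate $u$, and establishing the stable containment is where essentially all of the work lies. Third, your reduction from a derivation-type state at a zero supported on $T\subsetneq V(G)$ to an obstruction for $f_{G[T]}$ is unsubstantiated: the tangent directions of a positive-dimensional component of $Z(f_G)$ need not lie in the coordinate subspace indexed by $T$, so it is unclear why the ``higher moments'' of $\ph$ would define a functional on $\R[x_i\mid i\in T]$ at all, let alone one contradicting Reznick-certifiability of $G[T]$.

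The paper sidesteps all of this via the reduction of Laurent and Vargas (Lemma~\ref{lemma-isolated}): it suffices to show that Reznick-certifiability is preserved under adding an \emph{isolated} vertex. The key algebraic input is the identity $f_{G\oplus(n+1)}=g^2+\frac{\al+1}{\al}f_G$ with $g=\sqrt{\al}\,x_{n+1}^2-\frac{1}{\sqrt{\al}}(x_1^2+\cdots+x_n^2)$, which supplies both the generating set $F=\{g^2,f_G\}$ and the unit $u=g^2+\frac{\al+1}{\al}\bigl(\sum_{i=1}^nx_i^2\bigr)^{2r}f_G$ (a sum of squares by the induction hypothesis); the $F$-stable containment is then proved by an explicit polynomial manipulation modulo the sphere ideal, and the test-state condition follows because $\ph(u)=1$ forces $\ph(g^2)$ and $\ph(f_G)$ to be nonnegative and not both zero. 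If you want to pursue your route, you would need to replace the unproven Hessian/support reduction by an explicit identity of this kind.
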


Observe that this result is not a direct consequence of Reznick's theorem (Theorem \ref{reznick}) because the polynomial $f_G$ has zeros. For example, if $S\subseteq V=[n]$ is a stable set of size $\al(G)$, then we have $f_G(x)=0$ for the characteristic vector $x\in\R^n$ of $S$ (defined by $x_i=1$ for $i\in S$ and $x_i=0$ for $i\in[n]\setminus S$). In general $f_G$ may have infinitely many zeros on the sphere
\[\S^{n-1}:=\{x\in\R^n\mid\|x\|=1\},\]
as shown in \cite[Corollary 4.4]{lv1}.
The study of the sum-of-squares certificates for the matrices $M_G$ is motivated by the convergence analysis of a hierarchy of semidefinite approximations for $\al(G)$ proposed by de Klerk and Pasechnik \cite[Section 4]{kp}. This hierarchy is obtained by replacing the cone $\COP_n$ by the cones $\MK_n^{(r)}$ ($r\in\N_0$)
in the formulation (\ref{alpha-cop}) for $\al(G)$:
\[\th^{(r)}(G):=\min\left\{t\mid t(A_G+I)-J\in \MK_n^{(r)}\right\}.\]
Hence, we have 
\[\th^{(r)}(G)=\al(G) \iff M_G\in \MK_n^{(r)} \iff \left(\sum_{i=1}^nx_i^2\right)^rf_G\in \Si.\]
It is obvious that \[\al(G) \le\dots\le \th^{(3)}(G) \le \th^{(2)}(G)\le\th^{(1)}(G)\le \th^{(0)}(G).\]
Using Reznick's Theorem \ref{reznick}, it is easy to show that for any fixed graph $G$,
\[\lim_{r\to\infty}\th^{(r)}(G)=\al(G).\]
De Klerk and Pasechnik conjectured that this hierarchy converges for each non-empty graph to $\al(G)$ after $\al(G)-1$ steps, i.e., $\th^{(\al(G)-1)}(G)=\al(G)$
\cite[Conjecture 5.1]{kp}. 

\begin{conjecture}[de Klerk and Pasechnik]\label{conjecture}
For any non-empty graph $G$, we have  \[\left(\sum_{i=1}^nx_i^2\right)^{\al(G)-1}f_G\in \Si.\] In other words, $M_G\in \MK_n^{(\al(G)-1)}$.
\end{conjecture}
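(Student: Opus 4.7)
Since Theorem~\ref{theorem-alpha} already gives finite convergence without any explicit degree bound, the task is the quantitative refinement $r\le\al(G)-1$. The plan is to proceed by induction on $\al(G)$. The base case $\al(G)=1$ is trivial: $G$ is then a complete graph, so $A_G+I=J$ and $M_G=0$, giving $r=0$. For $\al(G)=2$ the claim reduces to showing $M_G\in\MK_n^{(1)}$; the paradigmatic case is the $5$-cycle, where $M_G$ is exactly the Horn matrix $H$ whose membership in $\MK_5^{(1)}$ is known from Parrilo's explicit certificate. I would attempt to generalize that explicit decomposition by exploiting how the zero set of $f_G$ on $\S^{n-1}$ stratifies according to the maximum stable sets of $G$.

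For the inductive step, I would exploit the structure of the real zero variety of $f_G$ on $\S^{n-1}$. Every maximum stable set $S\subseteq V$ produces a zero at the normalized characteristic vector of $S$, and as noted in the introduction there may be infinitely many additional zeros \cite{lv1}; these extra zeros lie on unions of simplices indexed by overlapping collections of maximum stable sets. The pure states machinery of \cite{bss} that underlies Theorem~\ref{theorem-alpha} attaches to each such zero a pure state whose ideal is, in favorable situations, cut out by linear forms corresponding to the underlying stable set. The idea is to show that the ``order of vanishing'' of $f_G$ along each stratum is controlled by the size of the supporting stable set, so that multiplication by $\left(\sum_{i=1}^n x_i^2\right)^{\al(G)-1}$ neutralizes every local obstruction; Reznick's Theorem~\ref{reznick}, applied on a uniform neighborhood of the zero set after quotienting out the local ideals, would then handle the remaining positive-definite part.

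The main obstacle will be the precise degree accounting. Extracting the tight combinatorial bound $\al(G)-1$, rather than some loose bound polynomial in $n$ and $\al(G)$, demands that one match the algebraic multiplicity of $f_G$ along each component of its zero locus with the combinatorial parameter $\al(G)$. This in turn seems to require an induction that behaves well under vertex deletion at a maximum stable set, an operation that changes $\al(G)$ by at most $1$ but can drastically alter the zero variety of $f_G$. I expect that a complete proof along these lines is out of reach without essentially new combinatorial input, and would realistically settle for either (i) a proof in restricted graph classes such as graphs with a unique maximum stable set or vertex-transitive graphs, or (ii) a weaker bound of the form $r\le p(\al(G))$ for some small polynomial $p$, leaving the sharp value $\al(G)-1$ as a target for subsequent work.
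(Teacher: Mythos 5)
You are attempting to prove Conjecture~\ref{conjecture}, which the paper explicitly leaves open: the authors prove only the qualitative Theorem~\ref{theorem-alpha}, i.e.\ that \emph{some} exponent $r$ works, and state plainly that ``the conjecture by de Klerk and Pasechnik remains open.'' Your proposal does not close that gap, and you concede as much in your final paragraph. Concretely: the base case $\al(G)=1$ is fine but vacuous ($M_G=0$); for $\al(G)=2$ you point to Parrilo's certificate for the $5$-cycle and hope to ``generalize the explicit decomposition,'' but no generalization is given (the known results for $\al(G)\le 8$ in \cite{gl} already require substantial separate work and are cited, not reproved, in the paper); and the inductive step consists of the assertion that the ``order of vanishing'' of $f_G$ along each stratum of its zero set is controlled by the size of the supporting stable set --- which is precisely the open quantitative question, restated rather than resolved. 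The step ``apply Reznick's Theorem~\ref{reznick} on a uniform neighborhood of the zero set after quotienting out the local ideals'' is not a meaningful operation: Reznick's theorem is a global statement about positive definite forms and has no local version producing a uniform denominator exponent; localizing at zeros is exactly what Scheiderer-type techniques and the pure-states machinery are for, and neither yields degree bounds.

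A further structural problem: the only route the paper offers to Theorem~\ref{theorem-alpha} is inherently non-effective. The membership criterion (Theorem~\ref{main-pure-states}) rests on Theorem~\ref{theorem-cone} (Effros--Handelman--Shen), proved via Zorn's lemma, Tychonoff and Krein--Milman, and the induction in Lemma~\ref{isolated-nodes} adds an \emph{isolated} vertex and produces a new exponent $r$ with no control in terms of the old one. So even a perfectly executed version of the paper's strategy cannot deliver $r\le\al(G)-1$; the sharp bound would need genuinely new input (e.g.\ an effective certificate construction that tracks degrees under vertex addition), which your sketch does not supply. As written, the proposal is a research plan, not a proof, and the statement remains a conjecture.
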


Conjecture \ref{conjecture} is known to hold for perfect graphs \cite[Lemma 5.2]{kp} (see also \cite[Lemma 4]{gl}), for graphs with $\al(G)\le 8$ \cite[Corollary 1]{gl} (see also \cite[Corollary~7]{pvz} for $\al(G)\le 6$) and for cycles and their complements \cite[Corollaries 5.4 and 5.6]{kp}. Our result (Theorem~\ref{theorem-alpha}) shows the finite convergence of the parameters $\th^{(r)}(G)$ to $\al(G)$. The conjecture by de Klerk and Pasechnik remains open.

\subsection*{Overview.} Our article can be seen as case study of the use of the theory of
pure states on ideals developed by Burgdorf, Scheiderer and Schweighofer in 2012 \cite{bss}. These pure states have been introduced as a tool to prove membership in
so-called \emph{quadratic modules}. We think that this tool did not receive enough attention and has a lot of potential. In Section \ref{sec:qm}, we will recall the notion of a (Archimedean) quadratic module of the polynomial ring $\R[x]$ and the most relevant facts about it, in particular how it is related to Reznick's theorem.
Section \ref{sec:pure} recalls the machinery of pure states from \cite{bss}. 
A hopefully more accessible version of this machinery will be presented in Section \ref{sec:test} where
we introduce the new notion of a \emph{test state}. The reader who skips the proofs in Section \ref{sec:test} can readily skip Section \ref{sec:pure}. Finally Sections \ref{sec:5}
and \ref{sec:stable} are devoted to the proofs of our main results, Theorem \ref{reznick5cop} and \ref{theorem-alpha}, respectively.
\\
\\
The main results of this article were included in the PhD thesis of the second author \cite{var}.


\section{Review of quadratic modules}\label{sec:qm}

For elements $a$ and $b$ and subsets $A$ and $B$ of the same ring (in this section $\R[x]$), we use self-explanatory notation such as
$AB:=\{ab\mid a\in A,b\in B\}$, $a+B:=\{a+b\mid b\in B\}$ and $Ab:=\{ab\mid a\in A\}$.

\begin{df}\label{def-qm}
Let $M\subseteq\R[x]$.
\begin{enumerate}[(a)]
\item We call
\[S(M):=\{a\in\R^n\mid p(a)\ge0\text{ for all }p\in M\}\]
the \emph{nonnegativity set} of $M$.
\item A subset $M$ of $\R[x]$ is called a \emph{quadratic module} of $\R[x]$ if \[1\in M,\quad M+M\subseteq M\quad\text{and}\quad\Si M\subseteq M.\]
\item A quadratic module $M$ is called \emph{Archimedean} if $M+\Z=\R[x]$.
\end{enumerate}
\end{df}

The following result is folklore \cite[Lemma 4.3.4]{schw}.

\begin{pro}\label{archchar}
Let $M$ be a quadratic module of $\R[x]$. Then, the following assertions are equivalent:
\begin{enumerate}[(a)]
\item $M$ is Archimedean.
\item There exists $N\in \N$ such that  $N-\sum_{i=1}^nx_i^2\in M$.
\end{enumerate}
\end{pro}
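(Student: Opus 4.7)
The proof splits into two implications. For (a) $\Rightarrow$ (b), I would apply the hypothesis $M+\Z=\R[x]$ to the polynomial $-\sum_{i=1}^n x_i^2$: writing $-\sum_{i=1}^n x_i^2 = m + k$ with $m\in M$ and $k\in\Z$ gives $(-k)-\sum_{i=1}^n x_i^2 = m \in M$. Since $\N_0\subseteq\Si\subseteq M$ (because $1\in M$ and $\Si\cdot M\subseteq M$), adding a sufficiently large $j\in\N_0$ yields $N-\sum_{i=1}^n x_i^2\in M$ with $N:=j-k\in\N$, which is (b).

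The substantive direction is (b) $\Rightarrow$ (a). The plan is the classical ring-of-bounded-elements argument: define
\[
A:=\{p\in\R[x]\mid\text{there exists }N\in\N\text{ with }N-p\in M\text{ and }N+p\in M\}
\]
and show that $A$ is an $\R$-subalgebra of $\R[x]$ containing $x_1,\ldots,x_n$. Since the variables generate $\R[x]$ as an $\R$-algebra, this will force $A=\R[x]$, and then for any $p\in\R[x]$ the decomposition $p=(N+p)+(-N)$ with $N+p\in M$ exhibits $p\in M+\Z$, giving the Archimedean property.

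Containment $\R\subseteq A$ follows from $\R_{\ge0}\subseteq\Si\subseteq M$. Closure of $A$ under addition is immediate by summing bounds, and closure under real scalar multiplication follows from the fact that $|\lambda|\cdot m\in\Si M\subseteq M$ for $m\in M$. The delicate step is closure under multiplication, since quadratic modules are not in general closed under products. I would reduce this via the polarization identity $4pq=(p+q)^2-(p-q)^2$ to showing that $A$ is closed under squaring, and for that I would use the key identity
\[
2N(N^2-p^2)=(N-p)^2(N+p)+(N+p)^2(N-p),
\]
whose right-hand side lies in $\Si M\subseteq M$ whenever $N\pm p\in M$. Dividing by the positive scalar $2N$ gives $N^2-p^2\in M$, while $N^2+p^2\in M$ is trivial, so $p^2\in A$.

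Finally, each variable $x_i$ lies in $A$: from (b), together with $x_j^2\in\Si\subseteq M$ for $j\ne i$, I obtain $N-x_i^2\in M$, and then
\[
(N+1)\pm 2x_i=(N-x_i^2)+(1\pm x_i)^2\in M,
\]
so after scaling by $\tfrac12$ and enlarging the bound to an integer I conclude $x_i\in A$. I expect the squaring/polarization step to be the only real obstacle; the remainder is routine bookkeeping with the quadratic-module axioms.
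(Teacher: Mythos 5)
Your proof is correct: the key identity $2N(N^2-p^2)=(N-p)^2(N+p)+(N+p)^2(N-p)$ checks out, and the bounded-elements subalgebra argument goes through. The paper itself gives no proof — it cites the result as folklore from \cite[Lemma 4.3.4]{schw} — and your argument is exactly the standard one that reference contains, so there is nothing to compare beyond noting the agreement.
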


\begin{ex}\label{sphere}
$M_{\S^{n-1}}:=\Si+\R[x](1-\sum_{i=1}^nx_i^2)$ is an Archimedean quadratic module of $\R[x]$ with the unit sphere as nonnegativity set:
 \[S\left(M_{\S^{n-1}}\right)=\S^{n-1}.\]
\end{ex}

The following result of de Klerk, Laurent and Parrilo from 2005 \cite[Proposition 2]{klp} will be very important for us:

\begin{pro}[de Klerk, Laurent and Parrilo]\label{reznick-putinar-sphere}
For every form $p\in\R[x]$ of even degree, the following are equivalent:
\begin{enumerate}[(a)]
\item $p$ satisfies \eqref{cert-reznick}, i.e., $\left(\sum_{i=1}^nx_i^2\right)^rp\in\Si$ for some $r\in\N_0$.
\item $p\in M_{\S^{(n-1)}}$
\end{enumerate}
\end{pro}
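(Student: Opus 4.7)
Let $s := \sum_{i=1}^n x_i^2$. The direction (a)$\Rightarrow$(b) should be essentially free: if $s^r p = \sigma \in \Si$, then the identity $1 - s^r = (1-s)(1 + s + \cdots + s^{r-1})$ yields
\[
p = s^r p + p(1 - s^r) = \sigma + p\,(1-s)(1+s+\cdots+s^{r-1}) \in \Si + \R[x](1-s) = M_{\S^{n-1}}.
\]

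For (b)$\Rightarrow$(a), my plan is to homogenize and exploit that $p$ has even degree. Write $p = \sigma + h(1-s)$ with $\sigma = \sum_{j} q_j^2 \in \Si$ and $h \in \R[x]$, set $d := \deg p$, and pick an even integer $m$ with $m \geq d$, $m \geq \deg h + 2$ and $m \geq 2\max_j \deg q_j$. Introduce a fresh variable $x_0$ and homogenize both sides to degree $m$ via the operator $f^{(m)}(x_0, x) := x_0^m f(x/x_0)$. Since $p$ is a form of degree $d$ and the homogenization of $1-s$ to degree $2$ is $x_0^2 - s$, this produces
\[
x_0^{m-d}\, p \;=\; \sum_j \bigl(q_j^{(m/2)}\bigr)^2 + (x_0^2 - s)\, h^{(m-2)}
\]
in $\R[x_0, x]$, with each $q_j^{(m/2)}$ a form of degree $m/2$.

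The next step is to symmetrize by averaging under $x_0 \mapsto -x_0$. The left-hand side is preserved because $m-d$ is even (this is exactly where the even-degree hypothesis on $p$ is used). Decomposing each $q_j^{(m/2)} = A_j + x_0 B_j$ with $A_j, B_j \in \R[x_0^2, x]$, the $x_0$-even part of $(q_j^{(m/2)})^2$ is $A_j^2 + x_0^2 B_j^2$. Specializing $x_0^2 = s$ in the symmetrized identity kills the $(x_0^2 - s)$ factor and yields
\[
s^{(m-d)/2}\, p \;=\; \sum_j \bigl(A_j|_{x_0^2 = s}\bigr)^2 + s \sum_j \bigl(B_j|_{x_0^2 = s}\bigr)^2 \;\in\; \Si,
\]
using $s \in \Si$. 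Setting $r := (m-d)/2$ gives the certificate in (a).

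The main obstacle is just keeping the homogenization--symmetrization bookkeeping straight; the one slightly delicate check is that $\sigma^{(m)} = \sum_j (q_j^{(m/2)})^2$, which follows from the identity $x_0^m \sigma(x/x_0) = \sum_j \bigl(x_0^{m/2} q_j(x/x_0)\bigr)^2$. The two crucial structural inputs are that $\deg p$ is even, so $m-d$ can be chosen even and $x_0^{m-d}$ specializes to a power of $s$, and that $s$ itself is a sum of squares, which absorbs the factor $x_0^2 = s$ attached to $B_j^2$.
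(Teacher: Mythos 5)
Your argument is correct: the telescoping identity handles (a)$\Rightarrow$(b), and the homogenization--symmetrization--specialization at $x_0^2=s$ for (b)$\Rightarrow$(a) is sound, with the even-degree hypothesis used exactly where it must be (to make $m-d$ even) and $s\in\Si$ absorbing the leftover factor on the $B_j$ terms. The paper itself gives no proof of this proposition --- it is quoted from de Klerk, Laurent and Parrilo \cite[Proposition 2]{klp} --- and your proof is essentially the standard argument from that source, so there is nothing to reconcile.
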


From this result, we will actually need only the weaker form below. We will need it to prove Lemma \ref{isolated-nodes}, which will be an important
ingredient to prove Theorem \ref{theorem-alpha}.

\begin{cor}[de Klerk, Laurent and Parrilo]\label{equiv-cert}
For every $M\in \SYM_n$, the following are equivalent:
\begin{enumerate}[(a)]
\item $M$ is Reznick-certifiable.
\item $(x^{\circ2})^TMx^{\circ2}\in M_{\S^{(n-1)}}$
\end{enumerate}
\end{cor}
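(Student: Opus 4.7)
The corollary is essentially an immediate instantiation of Proposition \ref{reznick-putinar-sphere}, so the plan is to verify the hypothesis of that proposition for the specific polynomial $p := (x^{\circ 2})^T M x^{\circ 2}$ and then simply quote the proposition.

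First I would observe that $p = \sum_{i,j=1}^{n} M_{ij} x_i^2 x_j^2$ is homogeneous of degree $4$ (or identically zero if $M = 0$, a trivial case that can be disposed of separately since $0$ lies in both $\Si$ and $M_{\S^{n-1}}$). In particular, $p$ is a form of even degree, which is the only hypothesis needed to invoke Proposition \ref{reznick-putinar-sphere}.

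Next, I would unwrap the two equivalent conditions of the corollary and match them with the two conditions of Proposition \ref{reznick-putinar-sphere}. By the definition of Reznick-certifiability, assertion (a) of the corollary is literally the assertion that there exists $r \in \N_0$ with $\left(\sum_{i=1}^n x_i^2\right)^r p \in \Si$, which is assertion (a) of the proposition applied to $p$. Assertion (b) of the corollary says $p \in M_{\S^{n-1}}$, which is assertion (b) of the proposition. Thus the equivalence follows directly.

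There is no real obstacle here; the only subtlety is the cosmetic observation that $(x^{\circ 2})^T M x^{\circ 2}$ is automatically an even-degree form regardless of $M$, so Proposition \ref{reznick-putinar-sphere} applies without any further condition on $M$. I would therefore write the proof in one or two sentences, noting that the corollary is the special case of Proposition \ref{reznick-putinar-sphere} for the even quartic form associated to $M$.
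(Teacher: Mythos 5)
Your proposal is correct and matches the paper's treatment exactly: the paper gives no separate argument for Corollary~\ref{equiv-cert}, presenting it as the immediate specialization of Proposition~\ref{reznick-putinar-sphere} to the even quartic form $(x^{\circ2})^TMx^{\circ2}$, which is precisely what you do. The remark about the trivial case $M=0$ is harmless but unnecessary.
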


Now that we have introduced quadratic modules and have announced that the particular quadratic module $M_{\S^{(n-1)}}$ will be important for us, we should
state the most popular result about quadratic modules, namely Putinar's theorem from 1993 \cite[Theorem 1.2]{put}
(here in an insignificantly stronger version that is covered for example by \cite[Theorem 4]{jac} or \cite[Corollary 8.2.11 together with Remark 8.2.12]{schw}).

For a polynomial $p\in\R[x]$ and a set $S\subseteq\R^n$,
we write ``$p>0$ on $S$'' to express that $p$ is pointwise positive on $S$, i.e., $p(a)>0$ for all $a\in S$. Analogously, ``$p\ge0$ on $S$'' has the obvious meaning.

\begin{thm}[Putinar]\label{putinar}
Let $M$ be an Archimedian quadratic module and $p\in\R[x]$. Then
\[p>0\text{ on }S(M)\implies p\in M.\]
\end{thm}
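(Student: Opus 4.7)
The plan is to argue by contradiction, following the classical Gelfand--Naimark--Segal strategy adapted to this commutative ordered-ring setting. Suppose $p\notin M$. Since $M$ is Archimedean, each finite-dimensional slice $M\cap\R[x]_d$ is a closed convex cone (Archimedeanity rules out unbounded ``drift'' inside any degree). A Hahn--Banach separation applied in a sufficiently large $\R[x]_d$ then produces a linear functional $L:\R[x]\to\R$ with $L(M)\subseteq[0,\infty)$, $L(p)\le0$, and $L(1)>0$; after rescaling, normalize so that $L(1)=1$.

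Next I would exploit the Archimedean hypothesis quantitatively to extract a representing measure for $L$. Applying $L$ to $f^2(N-\sum_{i=1}^nx_i^2)\in M$ gives
\[L(x_i^2f^2)\le NL(f^2)\qquad(i=1,\ldots,n,\ f\in\R[x]).\]
The symmetric bilinear form $\langle f,g\rangle:=L(fg)$ is positive semidefinite because $L\ge0$ on $\Si$. Its kernel $I:=\{f\in\R[x]\mid L(f^2)=0\}$ is an ideal of $\R[x]$: Cauchy--Schwarz gives $L(f^2h^2)^2\le L(f^2)\,L(f^2h^4)=0$ whenever $L(f^2)=0$, so $fh\in I$ for every $h\in\R[x]$. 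The quotient $\R[x]/I$ thus inherits a genuine inner product, whose Hilbert-space completion $H$ carries the pairwise commuting bounded self-adjoint multiplication operators $T_{x_1},\ldots,T_{x_n}$.

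The joint spectral theorem for these commuting operators, applied to the cyclic vector coming from $1\in\R[x]/I$, yields a Borel probability measure $\mu$ of compact support on $\R^n$ satisfying $L(q)=\int q\,d\mu$ for every $q\in\R[x]$. The key step, and the main technical obstacle, is to verify $\mathrm{supp}(\mu)\subseteq S(M)$. If some $g\in M$ were strictly negative at a point $a\in\mathrm{supp}(\mu)$, then $g<-\ep$ on a small ball $B$ around $a$ with $\mu(B)>0$, and by polynomial approximation one can choose $h\in\R[x]$ so that $h^2$ is sharply concentrated on $B$ relative to $\mathrm{supp}(\mu)$; then $\int gh^2\,d\mu<0$ would contradict $gh^2\in M$ and hence $L(gh^2)\ge0$.

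Once $\mathrm{supp}(\mu)\subseteq S(M)$ is established, the strict positivity hypothesis $p>0$ on $S(M)$, together with $\mu$ being a nonzero probability measure, forces
\[L(p)=\int p\,d\mu>0,\]
contradicting $L(p)\le0$ and therefore proving $p\in M$. The essential obstacle is the transition from the purely algebraic condition ``$L\ge0$ on $M$'' to the geometric conclusion that the associated measure lives on $S(M)$; this transition is precisely the place where the Archimedean hypothesis enters in a nontrivial way, both to bound the multiplication operators and to ensure that $S(M)$ is compact so that the measure-theoretic approximation argument can be carried out.
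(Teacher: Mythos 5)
Your argument is correct in outline, but it is a genuinely different proof from the one the paper relies on. The paper does not reprove Putinar's theorem: it cites the literature and observes (Remark~\ref{provesputinar}) that the theorem is the degenerate case $F=\{1\}$, $u=1$ of Theorem~\ref{main-pure-states}, whose proof runs through the Effros--Handelman--Shen theorem (Theorem~\ref{theorem-cone}) and the dichotomy theorem (Theorem~\ref{dichotomy}): with $I=\R[x]$ and $u=1$, case (II) is impossible, so every pure state of $(\R[x],M,1)$ is a point evaluation at some $a\in S(M)$, whence $\ph(p)=p(a)>0$ and $p$ is a unit of $M$. You instead follow the classical operator-theoretic route (essentially Putinar's original argument): separate, run the GNS construction, use Archimedeanity to bound the commuting multiplication operators, and extract a compactly supported representing measure living on $S(M)$ via the spectral theorem. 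The trade-off: the pure-state route stays algebraic and convex-geometric, replacing the spectral theorem and measure theory by Krein--Milman plus the dichotomy theorem; your route uses heavier analytic machinery, but each step is standard. Your Cauchy--Schwarz computation showing the kernel is an ideal, the operator bounds, and the support argument for $\mu$ are all fine.

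The one step you must repair is the separation. The claim that $M\cap\R[x]_d$ is closed is not a consequence of Archimedeanity that you can simply assert (closedness of truncations of quadratic modules is a delicate issue), and even granting it, it would not suffice: a functional on a single $\R[x]_d$ that is nonnegative on $M\cap\R[x]_d$ has no reason to admit an extension to $\R[x]$ that is nonnegative on all of $M$, so ``a sufficiently large $d$'' does not exist a priori. The correct tool is that Archimedeanity makes $1$ an order unit, i.e.\ an algebraic interior point of the convex cone $M$ in $\R[x]$; the Eidelheit--Kakutani separation theorem (equivalently, an M.~Riesz extension or Zorn's lemma argument) then yields a nonzero linear $L$ with $L(M)\subseteq\R_{\ge0}$ and $L(p)\le0$, and $L(1)>0$ follows since $L(1)=0$ together with $N\pm q\in M$ would force $L=0$. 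Alternatively, one can argue by compactness of the state space that if no state vanishes or is negative at $p$, then $p-\ep\cdot 1\in M$ for some $\ep>0$. With that fixed, the rest of your argument goes through.
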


It is an important topic in the literature under which additional hypotheses in Putinar's theorem
the condition \[p>0\text{ on }S(M)\] can be weakened to \[p\ge0\text{ on }S(M)\] (which is of course necessary for $p$ being contained in $M$).
We refer to \cite[Section 3]{sch2}, \cite[Section 3]{sch1}, \cite[Theorem 1.1]{nie}, \cite[Corollary 9.2.6]{schw} and the references therein.
One approach to this problem that we will pursue here is the theory of pure states on ideals developed by Burgdorf, Scheiderer and Schweighofer in 2012
\cite{bss}. We will recall this approach in Section \ref{sec:pure}.

The rough idea is that $p$ needs to be only nonnegative (instead of positive) at a point $a$ of $S(M)$
if it passes a number of tests at $a$. Such a test can often be that some (possibly higher) directional derivative of $p$ is positive at $a$ (see for example
\cite[Theorem 7.8]{bss}, \cite[Examples 8.3.4--8.3.6, Theorem 9.1.12]{schw}). Especially in the case where $p$ has infinitely many zeros on $S(M)$, the tests that have to be passed
at $a$ are however usually less of geometric than of algebraic nature.
This article can be seen as a case study that shows how to deal with such tests of rather algebraic nature. Formally, the tests
consist in testing positivity under so-called \emph{pure states} \cite{bss}. Their definition is complicated and we hope to make the method more popular by introducing the
much more concrete notion of a \emph{test state} in Definition~\ref{def-test-state} below.

As stated in Remark \ref{provesputinar} below, Putinar's theorem follows immediately from a very special case of the theory of pure states exposed in Section \ref{sec:pure}
below. The only case of Putinar's theorem that is directly relevant to us
is however the following special case of a weaker result of Cassier from 1984 \cite[Théorème 4]{cas}. We present
Cassier's theorem here as a corollary although it has been discovered much earlier than Putinar's theorem.

\begin{cor}[Cassier]\label{cassier}
Let $p\in\R[x]$. Then
\[p>0\text{ on }\S^{(n-1)}\implies p\in M_{\S^{(n-1)}}.\]
\end{cor}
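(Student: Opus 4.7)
The plan is to obtain Cassier's corollary as an immediate specialization of Putinar's theorem (Theorem~\ref{putinar}) to the quadratic module $M_{\S^{n-1}} = \Si + \R[x]\bigl(1-\sum_{i=1}^n x_i^2\bigr)$. So the task reduces to verifying that $M_{\S^{n-1}}$ satisfies the two hypotheses of Putinar's theorem, namely that it is Archimedean and that its nonnegativity set is the unit sphere.

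First I would check Archimedeanness via Proposition~\ref{archchar}. Writing $1-\sum_{i=1}^n x_i^2 = 1\cdot\bigl(1-\sum_{i=1}^n x_i^2\bigr) \in \R[x]\bigl(1-\sum_{i=1}^n x_i^2\bigr) \subseteq M_{\S^{n-1}}$, we get $N - \sum_{i=1}^n x_i^2 \in M_{\S^{n-1}}$ with $N=1$, so condition (b) of Proposition~\ref{archchar} holds and $M_{\S^{n-1}}$ is Archimedean.

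Next I would identify the nonnegativity set. Any point $a \in \S^{n-1}$ lies in $S(M_{\S^{n-1}})$ since for any $\sigma \in \Si$ and any $q \in \R[x]$ we have $(\sigma + q(1-\sum x_i^2))(a) = \sigma(a) \ge 0$. Conversely, both $1-\sum_{i=1}^n x_i^2$ and its negative $\sum_{i=1}^n x_i^2 - 1 = (-1)\bigl(1-\sum_{i=1}^n x_i^2\bigr)$ belong to $M_{\S^{n-1}}$, so any $a \in S(M_{\S^{n-1}})$ must satisfy $\sum_{i=1}^n a_i^2 = 1$. Hence $S(M_{\S^{n-1}}) = \S^{n-1}$, as already noted in Example~\ref{sphere}.

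Combining these two observations, the hypothesis $p > 0$ on $\S^{n-1}$ becomes $p > 0$ on $S(M_{\S^{n-1}})$, and Putinar's theorem directly yields $p \in M_{\S^{n-1}}$. There is no genuine obstacle here: all the work is already packed into Putinar's theorem, and the corollary's contribution is only to name the relevant quadratic module. (If one wanted to prove Cassier's theorem \emph{independently} of Putinar, the main difficulty would of course be the usual one — passing from strict positivity to an algebraic certificate on the compact set $\S^{n-1}$ — but here we are entitled to cite Theorem~\ref{putinar}.)
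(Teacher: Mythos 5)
Your proposal is correct and follows exactly the route the paper intends: Corollary~\ref{cassier} is stated as an immediate consequence of Putinar's theorem applied to the Archimedean quadratic module $M_{\S^{n-1}}$, whose nonnegativity set is $\S^{n-1}$ by Example~\ref{sphere}. Your explicit verifications of Archimedeanness and of $S(M_{\S^{n-1}})=\S^{n-1}$ simply spell out what the paper delegates to that example.
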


By Proposition \ref{reznick-putinar-sphere}, one can easily deduce the special case of Corollary \ref{cassier} where $p$ is a form
from Theorem \ref{reznick} and vice versa. By the same proposition, any example showing that Reznick's certificate \eqref{cert-reznick}
does not extend to all positive \emph{semi}definite forms shows also that Corollary \ref{cassier} does not hold in general with ``$>$'' replaced by ``$\ge$''.

\section{Review of pure states on ideals}\label{sec:pure}

The notion of a \emph{state} originally comes from quantum physics. In the operator-theoretic approach to quantum physics, a state is a positive unital linear functional on a
$C^*$-algebra \cite[Subsection 4.5]{ba}. Our setting still shares the term ``state'' and also the fact that it is hard to capture by our traditional thinking but otherwise is extremely
different:
\begin{itemize}
\item Whereas $C^*$-algebras are most interesting when they are non-commutative we work here with commutative (unital) rings.
\item $C^*$-algebras are complete whereas for us the polynomial ring $\R[x]$ will be very important which is far from being complete in many senses
(for example with respect to the maximum norm on the unit ball of $\R^n$).
\item Positivity in our case will be understood in a certain formal sense. More precisely, we will require a state to map only ring elements \emph{possessing certain
nonnegativity certificates}, built upon sums of squares, to be mapped into the nonnegative reals.
\item The linear functionals will be defined only on an ideal of the commutative ring which in general will not contain $1$. In particular, the linear functionals cannot be unital
(i.e., cannot map the unital element $1$ of the ring to the real number $1$). However, we will require to have a good substitute for the
unital element $1$ of the ring which we will usually denote by $u$.
\end{itemize}
In the literature, one can find several occurrences where the original notion of state is looked at in a slightly more general or different context, including some of
the aspects just mentioned. See for example the work of Krivine \cite[Théorème 15]{kri} and most notably of Handelman \cite[Proposition 1.2]{han}. What the work in
Handelman still lacks is that he does not work with \emph{sum-of-squares based} certificates. The reconciliation of Handelman's setting with the theory of
sums of squares is the main difficulty in the work of Burgdorf, Scheiderer and the first author \cite{bss} from 2012.
The aim of this section is to introduce the reader briefly to this work. In Section \ref{sec:test}, we will however introduce the notion of a \emph{test state} which is a compromise
between the notions of a \emph{state} and a \emph{pure state} which will hopefully make the theory more accessible. This section will only be needed for the proofs in
Section \ref{sec:test} and conversely is heavily based on \cite{bss}. The reader who wants to see the theory in \cite{bss} from
the new angle provided by \emph{test states}, without wanting to see the corresponding proofs, can skip this section. The reader interested in the proofs or more examples
is referred to \cite{bss} or \cite[Chapter 7]{schw}.

\begin{df}
Let $V$ be a real vector space. We call a subset $C$ of $V$ a \emph{cone} (of $V$)
if $0\in C$, $C+C\subseteq C$ and $\R_{\ge0}C\subseteq C$. In this case, we call an element $u$ of $C$
a \emph{unit} of the cone $C$ (in $V$), if $C+\Z u=V$.
\end{df}

What we simply call a cone and a unit is often called \emph{order unit} and \emph{convex cone} in the literature \cite[Page 118]{bss}. Intuitively, a unit of a cone
is a kind of yardstick one can use to measure a kind of distance to the cone.

\begin{ex}\label{ex1}
Let $M$ be a quadratic module of $\R[x]$. Then $M$ is a cone that is Archimedean if and only if $u:=1\in\R[x]$ is a unit for $M$.
\end{ex}

Already in this very general framework
from convex geometry, one can define the notion of a state as follows. This definition is mostly employed in the case where $C$ is a convex cone with unit $u$.

\begin{df}\label{def-state}
Let $V$ be a real vector space, $C\subseteq V$ and $u\in V$. Then a linear function $\ph\colon V\to \R$ is called a \emph{state} of $(V,C,u)$ if $\ph(C)\subseteq\R_{\ge0}$
and $\ph(u)=1$. We denote the set of all states of $(V,C,u)$ by $S(V,C,u)$ and call it the \emph{state space} of $(V,C,u)$.
A state $\ph\in S(V,C,u)$ is called a \emph{pure state} of $(V,C,u)$ if whenever \[\ph=\la\ph_1+(1-\la)\ph_2\]
for some states $\ph_1,\ph_2\in S(V,C,u)$ and
some $\la\in\R$ with $0<\la<1$ then \[\ph=\ph_1=\ph_2.\]
\end{df}

Readers that are acquainted with basic convex geometry, notice of course that in the situation of the preceding definition
$S(V,C,u)$ is a convex subset of the vector space that is dual to $V$ and that the pure states of $(V,C,u)$ are by definition
just the extreme points of this convex set.

We now recap the following very important criterion for showing that a vector $v\in V$ belongs to the cone $C$. Its proof can be based on Zorn's lemma,
Tychonoff's theorem and the Krein-Milman theorem. To our knowledge it first appears in \cite[Theorem 1.4]{EHS}, see also \cite[Theorem 7.3.19]{schw}.

\begin{thm}[Effros, Handelman and Shen] \label{theorem-cone}
Suppose $u$ is a unit for the cone $C$ in the real vector space $V$ and let $v\in V$. If $\ph(v)>0$ for all pure states $\ph$ of $(V,C,u)$, then $v$ is also a unit for the cone $C$.
In particular, there exists $\ep>0$ such that $v-\ep u\in C$ and thus $v\in C$.
\end{thm}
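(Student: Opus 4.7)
The plan is to view the state space $S := S(V,C,u)$ as a non-empty convex weak-$*$ compact subset of the algebraic dual of $V$, use Krein--Milman to upgrade the hypothesis ``$\varphi(v) > 0$ on pure states'' into a uniform lower bound ``$\varphi(v) \ge \delta$ on all states'', and then run a Hahn--Banach separation argument in the order-unit seminorm to conclude that $v - \varepsilon u \in C$ for some $\varepsilon > 0$.

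For the setup of $S$, non-emptiness comes from extending $ru \mapsto r$ from $\R u$ to $V$ via Hahn--Banach (this is where Zorn's lemma enters), bounded above by the sublinear functional $p(w) := \inf\{r \ge 0 \mid ru - w \in C\}$, which satisfies $p(u) = 1$ precisely because $-u \notin C$ (the trivial case $C = V$ being handled separately). Convexity of $S$ is clear, and weak-$*$ compactness is Tychonoff: the unit condition $C + \Z u = V$ supplies, for each $w \in V$, some $N_w \in \N$ with $\pm w + N_w u \in C$, so $|\varphi(w)| \le N_w$ for every $\varphi \in S$, embedding $S$ as a closed subset of the compact product $\prod_{w \in V}[-N_w, N_w]$.

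Next, the continuous affine function $\varphi \mapsto \varphi(v)$ attains its infimum on the compact set $S$ on a non-empty closed face (closed by continuity, face by affine-linearity), which by Krein--Milman contains an extreme point of $S$, i.e., a pure state by Definition~\ref{def-state}. The hypothesis therefore yields $\delta > 0$ with $\varphi(v) \ge \delta$ for every $\varphi \in S$.

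Finally, equip $V$ with the order-unit seminorm $\|w\|_u := \inf\{r > 0 \mid ru \pm w \in C\}$; a short check shows $u \in \mathrm{int}(C)$ in this topology. If $v - \tfrac{\delta}{2} u$ were not in $C$, geometric Hahn--Banach would separate it from the open convex cone $\mathrm{int}(C)$ by a nonzero continuous linear functional $\psi$ with $\psi \le 0$ on $C$ and $\psi(v - \tfrac{\delta}{2} u) \ge 0$. The Archimedean property rules out $\psi(u) = 0$ (otherwise the squeeze $|\psi(w)| \le N_w |\psi(u)|$ forces $\psi \equiv 0$), so $\psi(u) < 0$; then $\psi / \psi(u)$ is a genuine state with value $\le \tfrac{\delta}{2} < \delta$ at $v$, contradicting the previous paragraph. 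Hence $v - \tfrac{\delta}{2} u \in C$, which is the asserted inclusion and makes $v$ itself a unit for $C$. I expect this last step to be the main obstacle: handling the separation rigorously and, crucially, verifying via the Archimedean property that the separating functional pairs non-trivially with $u$ so that it can be normalized to a proper state.
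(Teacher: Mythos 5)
Your argument is correct and is exactly the standard proof of the Effros--Handelman--Shen criterion: the paper itself gives no proof but cites \cite{EHS} and \cite{schw} and names precisely the three ingredients you use (Zorn via Hahn--Banach for non-emptiness of the state space, Tychonoff for its weak-$*$ compactness, Krein--Milman to pass from pure states to all states), followed by the order-unit-seminorm separation step. All the delicate points --- the trivial case $C=V$, the uniform bound $|\ph(w)|\le N_w$, and ruling out $\psi(u)=0$ via the Archimedean squeeze so the separating functional normalizes to a state --- are handled correctly.
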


We will apply Theorem \ref{theorem-cone} only in the proof of Theorem \ref{main-pure-states} below. 
Of course, it might be good for certain applications that the conclusion of this theorem is even that $v$ is a unit for $C$ rather than just $v\in C$. For us, this will however be rather a bad thing: It means that the theorem
can certainly not be applied to prove membership of elements that are not units of the cone. Because of Proposition \ref{reznick-putinar-sphere}, we will be interested in proving
membership of polynomials that are not positive on the sphere $\S^{n-1}$ in the quadratic module $M_{\S^{n-1}}$ (seen as a cone of the real vector space $\R[x]$).
Although $1$ is a unit for $M_{\S^{n-1}}$ by Examples \ref{sphere} and \ref{ex1}, it will therefore not work to work to apply the above theorem with $u:=1$. We will find different choices of $u$ that will however entail that we also will have to pass over from $\R[x]$ to a smaller subspace
(in order for $u$ continuing to be a unit) which will
actually be a proper ideal of the ring $\R[x]$. Consequently, we will have to work with a subcone of $M_{\S^{n-1}}$ (since $1\notin M_{\S^{n-1}}\setminus I$) which will turn
out to fulfill some algebraic closedness
properties captured by the following definition \cite{bss,schw}.

\begin{df}
Let $A$ be a commutative ring. The subset $T\subseteq A$ is called a \emph{preorder} of $A$ if   $A^2:=\{a^2\mid a\in A\} \subseteq T$, $T+T\subseteq T$ and $TT\subseteq T$.  Given a preorder $T$ of $A$, we say that $M\subseteq A$ is a \emph{$T$-module} of $A$ if $0\in M$, $M+M\subseteq M$, and $TM\subseteq M$. 
\end{df}

Note that we do not require $1\in M$ in the preceding definition.

\begin{ex}
Of course, $\Si$ is the smallest preorder of $\R[x]$. For any $M\subseteq\R[x]$,
$M$ is a quadratic module if and only if $1\in M$ and $M$ is a $\Si$-module of $\R[x]$.
\end{ex}

We now state a special case of
the \emph{dichotomy theorem} \cite[Corollary 4.12]{bss}  (see also \cite[Theorem 8.3.2]{schw}) which will be the second important ingredient
in the proof of Theorem \ref{theorem-cone} below. It divides the pure states of certain cones into two classes.

\begin{thm}[Burgdorf, Scheiderer and Schweighofer]\label{dichotomy}
Let $A$ be a commutative ring containing $\R$ as a subring. Suppose that $I$ is an ideal of $A$,  $T$ is a preorder of $A$, $M\subseteq I$ is a $T$-module of $A$, $u$ is a unit of $M$ in $I$, and $\ph$ is a pure state of $(I,M,u)$. Then exactly one of the following two assertions holds.
\begin{enumerate}[(I)]
\item $\ph$ is the restriction of a scaled ring homomorphism: There exists a ring homomorphism $\Ph\colon A\to \R$ such that $\Ph(u)\ne 0$ and \[\ph=\frac{1}{\Ph(u)}\Ph|_I.\]
\item There exists a ring homomorphism $\Ph\colon A\to \R$ with $\Ph|_I=0$ such that 
\[\ph(ab)=\Ph(a)\ph(b)\]
for all $a\in A$ and $b\in I$.
\end{enumerate}
\end{thm}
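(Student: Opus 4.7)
My plan is to first prove the unified claim that there is a ring homomorphism $\Ph\colon A\to\R$ satisfying
\[\ph(ab)=\Ph(a)\ph(b)\quad\text{for all }a\in A\text{ and }b\in I,\]
and then to recover the dichotomy by testing the value of $\Ph(u)$. Setting $a=u$ and using $\ph(u)=1$ gives $\Ph|_I=\Ph(u)\,\ph$: if $\Ph(u)\ne 0$, this rearranges to $\ph=\Ph|_I/\Ph(u)$, which is case (I); if $\Ph(u)=0$, it reads $\Ph|_I=0$, which is case (II). The two cases are mutually exclusive because $u\in I$.

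For the construction of $\Ph$, the central object is, for each $a\in T$, the linear functional
\[\ps_a\colon I\to\R,\quad b\mapsto\ph(ab),\]
which is well-defined since $I$ is an ideal and is nonnegative on $M$ since $aM\subseteq TM\subseteq M$. I will invoke the extremal characterization of pure states: every linear functional $\ps$ on $I$ with $0\le\ps\le\ph$ pointwise on $M$ is a nonnegative scalar multiple of $\ph$. The key lemma behind this characterization is that $u$ being a unit of $M$ in $I$ forces any linear functional nonnegative on $M$ and vanishing at $u$ to vanish on all of $I$; this pushes the reduction through from the convex-combination definition of purity. Granting a bound $C_a\ph-\ps_a\ge0$ on $M$, this forces $\ps_a=\ph(au)\,\ph$ on $I$, with the scalar read off at $b=u$. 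I then set $\Ph(a):=\ph(au)$ for $a\in T$, extend to all of $A$ linearly using $4a=(a+1)^2-(a-1)^2\in T-T$, and verify that $\Ph$ is a ring homomorphism: additivity is immediate, $\Ph(1)=\ph(u)=1$, and multiplicativity follows from
\[\Ph(aa')\ph(b)=\ph(aa'b)=\Ph(a)\ph(a'b)=\Ph(a)\Ph(a')\ph(b)\]
for all $b\in I$, combined with $\ph(u)=1\ne 0$.

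The main obstacle is the domination step: producing $C_a\in\R$ with $C_a\ph-\ps_a\ge 0$ on $M$ for each $a\in T$. The easy route would be to find $N\in\N$ with $N-a\in T$, so that $(N-a)m\in TM\subseteq M$ and hence $\ph((N-a)m)\ge 0$; but the preorder $T$ is \emph{not} assumed to be Archimedean in $A$, only that the $T$-module $M$ admits $u$ as a unit \emph{inside} $I$. The approach in \cite{bss} is to exploit $u$ through the positive semidefinite bilinear form $(b,c)\mapsto\ph(bcu)$ on $I\times I$, which is well-defined because $I$ is an ideal and nonnegative on the diagonal because $b^2u\in T\cdot M\subseteq M$, and then to combine the resulting Cauchy--Schwarz estimates with the unit property of $u$ to produce the required domination of $\ps_a$ by a scalar multiple of $\ph$.
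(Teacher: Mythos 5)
The paper itself offers no proof of this theorem: it is quoted from \cite[Corollary 4.12]{bss} (see also \cite[Theorem 8.3.2]{schw}), so there is no in-paper argument to compare against. Your architecture does match the proof in \cite{bss}: establish the single identity $\ph(ab)=\Ph(a)\ph(b)$ with $\Ph(a):=\ph(au)$, check that $\Ph$ is a ring homomorphism, and read off the two cases from whether $\Ph(u)$ vanishes. The parts you carry out are correct: the reduction of the dichotomy (and its exclusivity, via $\ph(u^2)$) to the unified claim, the extremal characterization of pure states resting on the fact that a functional nonnegative on $M$ and vanishing at the unit $u$ vanishes on all of $I$, the passage from $T$ to $A=T-T$ via $4a=(a+1)^2-(a-1)^2$, and the multiplicativity check at $b=u$.

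There is, however, a genuine gap exactly where you locate the main obstacle: the domination $C_a\ph-\ps_a\ge0$ on $M$ for $a\in T$ is asserted rather than proved, and the one concrete tool you name --- the form $(b,c)\mapsto\ph(bcu)$ on $I\times I$ --- is not the right one for this step: it never compares $\ph(am)$ with $\ph(m)$ for a general $m\in M$, and since $1\notin I$ in general you cannot substitute $c=1$ into it. What closes the gap is, for each fixed $m\in M$, the positive semidefinite form $(s,t)\mapsto\ph(stm)$ on $A\times A$ (its diagonal is nonnegative because $A^2M\subseteq TM\subseteq M$). A single Cauchy--Schwarz application only yields $\ph(am)^2\le\ph(a^2m)\ph(m)$, which is not yet a domination; one must iterate to get $\ph(am)^{2^k}\le\ph(a^{2^k}m)\,\ph(m)^{2^k-1}$, then bound $\ph(a^{2^k}m)\le N_m\,\ph(a^{2^k}u)\le N_m\,n^{2^k}$, where $N_mu-m\in M$ and $nu-au\in M$ are supplied by the unit property and the second chain of inequalities comes from multiplying $nu-au$ by $a^j\in T$ to get $\ph(a^{j+1}u)\le n\,\ph(a^ju)$, and finally take $2^k$-th roots and let $k\to\infty$ to conclude $\ph(am)\le n\,\ph(m)$ (the case $\ph(m)=0$ being handled directly by the first inequality). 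Without this iteration-plus-limit argument, or an equivalent, the central estimate of the theorem is missing, so as written your proposal is an accurate outline of the known proof rather than a self-contained proof.
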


In the situation where $A=\R[x]$, it is easy to see that ring homomorphisms from $A$ to $\R$ are exactly the point evaluations in points of $\R^n$. More generally, states of type (I)
are in general easy to understand whereas pure states of type (II) remain often mysterious. The condition defining them is however easy to remember and will re-appear in
disguise in Definition \ref{def-test-state} below, where we will introduce the new notion of a \emph{test state}, which will allow to forget about states and pure states, at least for
our purposes.

\section{Test states and membership in quadratic modules}\label{sec:test}

In this section, we will introduce \emph{test states} and provide them
as a new tool to prove membership in Archimedian quadratic modules. While test states will remain mysterious in many
cases, they are easier to understand than the pure states from the preceding section that will only be important for the proofs in this section. For most purposes, it will probably
be enough to study test states rather than pure states. At least this will be the case for the proof of our two main results in the subsequent sections.

Test states come into play when the application of Putinar's theorem (Theorem~\ref{putinar}) is not possible because the polynomial $p$ for which one would
like to prove membership in the quadratic module $M$ has zeros on $S(M)$. Other techniques (pioneered to a large extent by Scheiderer
\cite{sch1,sch2}) might work as well in this case but come soon to their limits in the case where $p$ has infinitely many zeros on $S(M)$. In the case where
$p$ has finitely many zeros on $S(M)$, extensions of Putinar's theorem are known that still work when commonly known first and second order sufficient criteria
for (strict) local minima are satisfied by $p$ \cite{nie}. In this special case, pure states and test states would typically yield similar extensions since they typically
turn out to be related to (higher) derivatives at points of $S(M)$ (see Example \ref{derex} below or \cite[Examples 8.3.4--8.3.6]{schw}). The case when $p$ has infinitely many zeros
on $S(M)$ is much more difficult to handle and remains mysterious. In this case, we suspect that test states cannot be understood in purely geometric terms but rather
lead to conditions that are somehow related to the zeros of $p$ on $S(M)$ and yet are of algebraic nature. The concrete definition is
motivated by Condition (II) in the dichotomy theorem (Theorem \ref{dichotomy}), and is as follows.

\begin{df}\label{def-test-state}
Let $I$ be an ideal and $M$ be a quadratic module of $\R[x]$. Let $u\in I$ and $a\in\R^n$.
We call $\ph\in S(I,M\cap I,u)$ a \emph{test state} on $I$ for $M$ at $a$ with respect to $u$ if
\[\ph(pq)=p(a)\ph(q)\]
for all $p\in\R[x]$ and $q\in I$. We denote by \[T(I,M,u)_a\] the set of test states on $I$ for $M$ at $a$ with respect to $u$.
\end{df}

This definition might be hard to understand at first glance and we start by considering the case where $I=\R[x]$ and $u=1$.

\begin{ex}
Let $M$ be a quadratic module of $\R[x]$ and $a\in\R^n$. If $\ph$ is a test state on $\R[x]$ for $M$ at $a$ with respect to $1$. then we have 
\[\ph(p)=\ph(p\cdot 1)=p(a)\ph(1)=p(a) \text{ for all }p\in \R[x].\]
Therefore the only test state on $\R[x]$ for $M$ at $a$ with respect to $1$ is the evaluation at $a$
\[\R[x]\to\R,\ p\mapsto p(a).\]
\end{ex}

Before we see more examples, we introduce the following useful notation.

\begin{df} For a given polynomial $f\in\R[x]$, we denote by
\[Z(f):=\{a\in\R^n\mid f(a)=0\}\]
its (real) \emph{zero set}.
For each $F\subseteq\R[x]$, we denote by
\[I(F):=\left\{\sum_{i=1}^mg_if_i\mid m\in\N_0,g_1,\ldots,g_m\in\R[x]\right\}\]
 the ideal generated by $F$.
\end{df}

Now we come to a simple example of a test state that is not defined on the whole polynomial ring.

\begin{ex}\label{derex}
Suppose $n=1$ so that $\R[x]=\R[x_1]$. Let $M$ be a quadratic module of $\R[x]$ and $k\in\N_0$.
The only function that could possibly be a test state
on the ideal $I(x^k)=x^k\R[x]$ for $M$ at $0$ with respect to $x^k$ is
\[x^kR[x]\to\R,\ p\mapsto\frac1{k!}p^{(k)}(0),\]
that is, up to scaling, the $k$-th derivative at $0$.
Whether this function actually is a such a test state, depends on whether $\ph(M\cap I)\subseteq\R_{\ge0}$. If $S(M)$ contains some interval $[0,\ep]$ for some $\ep>0$ this is the case.
If $k$ is even, then it is also the case if $S(M)$ contains some interval $[-\ep,0]$ for some $\ep>0$.
\end{ex}

The following remark will be useful in the proof of Lemma \ref{isolated-nodes}.

\begin{rem}\label{rem-support}
Let $\ph$ be a test state on the ideal $I$ of $\R[x]$ for the quadratic module $M$ of $\R[x]$ at $a\in\R^n$ with respect to $u\in I$.
Suppose that $uM\subseteq M$. Then \[p(a)=p(a)\ph(u)=\ph(pu)=\ph(up)\ge0\] for all $p\in M$. Hence $a\in S(M)$, and
$a\in Z(f)$ for all $f\in M\cap(-M)$.
\end{rem}

Before we come to a strengthening of Putinar's theorem (Theorem \ref{putinar}) that involves test states, we need some preparation.

\begin{df} Let $V$ be a real vector space, $C\subseteq V$ a cone, $u\in V$ and $\emptyset\ne F\subseteq V$.
We say that $u$ is \emph{$F$-stably contained} in $C$ if for all $f\in F$ there exists a real $\ep>0$ such that $u+\ep f\in C$ and $u-\ep f\in C$.
\end{df}

Note that in the situation of the above definition, every element $F$-stably contained in $C$ is actually contained in $C$.

The following important lemma is essentially covered by \cite[Proposition 3.2]{bss} (see also \cite[Proposition 8.1.12]{schw}). Since these references use quite different
notation, we include the proof for convenience.

\begin{lem}\label{prop-unit}
Let $F\subseteq\R[x]$ be a nonempty set that generates the ideal $I:=I(F)$. Let $M$ be an Archimedean quadratic module of $\R[x]$ and $u\in \R[x]$ be a polynomial such that $uM\subseteq M$ such that $u$ is $F$-stably contained in $M$.
Then $u$ is even $I$-stably contained in $M$. In particular, if $u\in I$, then $u$ is a unit of the cone $I\cap M$ in the vector space $I$.
\end{lem}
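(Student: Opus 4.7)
The plan is to introduce
\[S := \{g \in \R[x] \mid \text{there exists } \ep > 0 \text{ with } u \pm \ep g \in M\},\]
so that $u$ being $I$-stably contained in $M$ is equivalent to $I \subseteq S$. Since $F \subseteq S$ by hypothesis, it will suffice to prove that $S$ is an ideal of $\R[x]$, because then $S \supseteq I(F) = I$ automatically. I would first note that $u \in M$: summing $u + \ep f \in M$ and $u - \ep f \in M$ for any $f \in F$ yields $2u \in M$, hence $u \in M$. A second easy observation is that if $u + \ep g \in M$, then also $u + \ep' g \in M$ for every $0 < \ep' \le \ep$, via the convex combination
\[u + \ep' g = \tfrac{\ep'}{\ep}(u + \ep g) + \bigl(1 - \tfrac{\ep'}{\ep}\bigr) u.\]
These two remarks make it routine to check that $S$ is closed under $\R$-scalar multiplication and under addition (in the latter case, pass to a common $\ep$ and average), so $S$ is an $\R$-linear subspace of $\R[x]$.

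The crucial step, and what I expect to be the main obstacle, is showing that $S$ is closed under multiplication by arbitrary polynomials. I would first handle multiplication by $q \in \Si$. Given $g \in S$ with $u \pm \ep g \in M$, the decisive identity is
\[u + \de\, q g \;=\; \Bigl(1 - \tfrac{\de}{\ep}\,q\Bigr)\, u \;+\; \tfrac{\de}{\ep}\,q\,(u + \ep g).\]
The second summand lies in $\Si M \subseteq M$. For the first, Archimedeanness of $M$ (Proposition~\ref{archchar}) provides $N \in \N$ with $N - q \in M$, and then
\[1 - \tfrac{\de}{\ep}\,q \;=\; \bigl(1 - \tfrac{\de N}{\ep}\bigr) \;+\; \tfrac{\de}{\ep}(N - q) \;\in\; M\]
as soon as $\de \le \ep/N$, since both summands then lie in $\R_{\ge0} M \subseteq M$. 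Multiplying by $u$ and invoking the hypothesis $uM \subseteq M$ puts $(1 - \tfrac{\de}{\ep}q) u$ in $M$, so $u + \de q g \in M$; the symmetric argument using $u - \ep g$ yields $u - \de q g \in M$, and hence $qg \in S$. To upgrade this to arbitrary $h \in \R[x]$, I would use the polarization identity $4 h g = (h+1)^2 g - (h-1)^2 g$: since $(h \pm 1)^2 \in \Si$, each term $(h \pm 1)^2 g$ lies in $S$, and the $\R$-vector space structure of $S$ then gives $hg \in S$. Thus $S$ is an ideal of $\R[x]$, as required.

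For the final assertion, assume $u \in I$. Given any $g \in I$, the $I$-stable containment just established yields $\ep > 0$ with $u + \ep g \in M$, and since $u, g \in I$ this element actually lies in $M \cap I$. Picking $N \in \N$ with $N \ge 1/\ep$, the identity
\[g \;=\; \tfrac{1}{\ep}(u + \ep g) \;+\; \bigl(N - \tfrac{1}{\ep}\bigr) u \;-\; N u\]
writes $g$ as a nonnegative combination of elements of $M \cap I$ (the first two summands) minus an integer multiple of $u$, so $M \cap I + \Z u = I$, proving that $u$ is a unit of the cone $M \cap I$ in $I$. The conceptual heart of the argument is that the hypothesis $uM \subseteq M$ allows one to absorb arbitrarily large $\Si$-coefficients into $M$ by multiplying with $u$, and that Archimedeanness is exactly what makes this absorption quantitatively feasible.
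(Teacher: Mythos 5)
Your proposal is correct and takes essentially the same approach as the paper: both define the set of $g$ with $u\pm\ep g\in M$, prove it is an ideal via polarization $4hg=(h+1)^2g-(h-1)^2g$, and handle the square/$\Si$ case by using Archimedeanness to bound the multiplier and the hypothesis $uM\subseteq M$ to absorb the resulting correction term. The only differences are cosmetic (your convex-combination phrasing versus the paper's direct additive decomposition with a common bound $N$).
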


\begin{proof}
It suffices to show that
\[J:=\{p\in \R[x]\mid \text{ there exists $\ep>0$ such that $u\pm \ep p\in M$}\}.\]
is an ideal of $\R[x]$. Indeed, the condition ``$u$ $F$-stably contained in $M$'' means $F\subseteq J$, while ``$u$ $I$-stably contained in $M$'' means $I\subseteq J$.

Clearly, $p\in J$ if and only if $-p\in J$. Now, if $\de>0$ and $\ep>0$ such that $u\pm \de p\in M$ and $u\pm \ep q\in M$ then $(\frac{1}{\de}+\frac{1}{\ep})u\pm (p-q) \in M$. Hence, 
 \[u\pm \frac{1}{\frac{1}\de+\frac{1}\ep}(p-q) \in M.\] 
 This shows that $p- q\in J$ if $p,q\in J$. 
 
 We finally show that if $p\in J$, then $pq\in J$ for all $q\in \R[x]$. For this, we observe that the following identity holds 
 \[q = \frac{1}{4}((q+1)^2-(q-1)^2).\]
 Then, it suffices to show that $pq^2\in J$ for all $q\in \R[x]$. Since $M$ is Archimedean and $p\in J$, there exists $N>0$ such that $N-q^2\in M$ and $Nu\pm p\in M$. Since $uM\subseteq M$, we have $Nu-uq^2\in M$. Since $\Si M \subseteq M$, we have $Nuq^2\pm pq^2\in M$. Hence,
\[ N^2u \pm pq^2=(N^2u-Nuq^2)+(Nuq^2\pm pq^2)\in M+M\subseteq M,\] 
as desired.
\end{proof}

Now we come to our membership criterion for quadratic modules which looks extremely technical, but will turn out to be very useful.

\begin{thm}\label{main-pure-states}
Let $F\subseteq \R[x]$ be a nonempty set of polynomials generating the ideal $I:=I(F)$. Let $M$ be an Archimedean quadratic module of $\R[x]$ and $f,u\in I$.  Suppose that
\begin{enumerate}[(a)]
\item $f\ge 0$ on $S(M)$,
\item $Z(f)\cap S(M)\subseteq Z(u)\cap S(M)$,
\item $uM\subseteq M$,
\item $u$ is $F$-stably contained in $M$, and
\item $\ph(f)>0$ for all $a\in Z(f)\cap S(M)$ and all $\ph\in T(I,M,u)_a$. 
\end{enumerate}
Then, there is $\ep>0$ such that $f-\ep u\in M$. In particular, $f\in M$.
\end{thm}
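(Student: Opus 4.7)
The plan is to apply the Effros--Handelman--Shen theorem (Theorem \ref{theorem-cone}) to the triple $(V, C, u) = (I, M \cap I, u)$. I would first verify the ``unit'' hypothesis: by Lemma \ref{prop-unit}, conditions (c) and (d) upgrade the $F$-stable containment of $u$ in $M$ to $I$-stable containment, and since $u \in I$ this shows that $u$ is a unit of the cone $M \cap I$ in the vector space $I$. Note also that $u \in M$ itself: picking any $g \in F$, the witnesses $u \pm \ep g \in M$ add to give $2u \in M$, hence $u \in M$. Consequently, once I have established $\ph(f) > 0$ for every pure state $\ph$ of $(I, M \cap I, u)$, Theorem \ref{theorem-cone} will produce some $\ep > 0$ with $f - \ep u \in M \cap I \subseteq M$; combined with $\ep u \in M$, this gives $f \in M$.

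The heart of the proof is establishing $\ph(f) > 0$ for every such pure state $\ph$, which I would do by invoking the dichotomy theorem (Theorem \ref{dichotomy}) and treating the two cases separately. In type (I), $\ph = \frac{1}{u(a)}\Ph|_I$ where $\Ph$ is evaluation at some $a \in \R^n$ with $u(a) \ne 0$. For any $p \in M$, (c) gives $up \in M \cap I$, whence $0 \le \ph(up) = u(a)p(a)/u(a) = p(a)$; this shows $a \in S(M)$. Taking $p = u \in M$ yields $u(a) \ge 0$, so $u(a) > 0$. By (a), $f(a) \ge 0$; if $f(a) = 0$ then (b) would force $u(a) = 0$, a contradiction, so $\ph(f) = f(a)/u(a) > 0$ as required.

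In type (II), there is a point $x_0 \in \R^n$ (the ring homomorphism $\Ph$ from $\R[x]$ to $\R$ being evaluation at $x_0$) such that $p(x_0) = 0$ for all $p \in I$ and $\ph(pq) = p(x_0)\ph(q)$ for all $p \in \R[x]$ and $q \in I$. Taking $q = u$ and $p \in M$, we have $pu \in M \cap I$ and $\ph(pu) = p(x_0)\ph(u) = p(x_0)$, so $\ph(pu) \ge 0$ forces $x_0 \in S(M)$. Moreover $f \in I$ gives $f(x_0) = 0$, so $x_0 \in Z(f) \cap S(M)$. The multiplicative identity defining type (II) is literally the test-state condition of Definition \ref{def-test-state}, so $\ph \in T(I, M, u)_{x_0}$, and hypothesis (e) yields $\ph(f) > 0$.

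The main conceptual hurdle I anticipate is recognizing that Definition \ref{def-test-state} is an elementary reformulation of Condition (II) in the dichotomy theorem. Once that correspondence is clear, the five hypotheses partition the work cleanly: (c) and (d) provide the framework in which Theorem \ref{theorem-cone} applies; (a) and (b) jointly rule out the dangerous type (I) pure states at zeros of $f$; and (e) is precisely what is needed to dispatch the type (II) pure states. No heavy calculation is needed beyond unpacking definitions and repeatedly using multiplication by $u$ to lift elements of $M$ into $M \cap I$ so that $\ph$ can be evaluated on them.
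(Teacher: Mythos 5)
Your proposal is correct and follows essentially the same route as the paper: Lemma \ref{prop-unit} to make $u$ a unit of $M\cap I$ in $I$, the dichotomy theorem to split the pure states of $(I,M\cap I,u)$ into evaluation-type and test-state-type, hypotheses (a)--(b) to handle the former and (e) the latter, and Theorem \ref{theorem-cone} to conclude. The only (harmless) variations are cosmetic: you deduce $f(x_0)=0$ in type (II) directly from $\Ph|_I=0$ rather than by computing $\ph(fu)$ in two ways, and you make explicit the observation $u\in M$ that the paper uses tacitly.
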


\begin{proof}
We will apply Theorem \ref{theorem-cone} in the following setting: The vector space is the ideal $I$, and the cone is $M\cap I$. In view of Lemma \ref{prop-unit}, using assumptions (c) and (d), we have that $u$ is a unit of $I\cap M$ in $I$. So let $\ph$ be a pure state of $(I,I\cap M, u)$. We show that $\ph(f)>0$. 

To this end, we apply Theorem \ref{dichotomy} in the following setting: The ring is $\R[x]$, the ideal is $I$, the preorder is $T:=\Si + u\Si$, the $T$-module is $M\cap I$
(use that $uM\subseteq M$), and the unit is $u$. The dichotomy theorem (Theorem~\ref{dichotomy}) now says that one of the following two alternatives holds:
\begin{enumerate}[(I)]
\item $\ph$ is the restriction of a scaled ring homomorphism: There exists a ring homomorphism $\Ph\colon\R[x] \to \R$ such that $\Ph(u)\ne 0$ and $\ph=\frac{1}{\Ph(u)}\Ph|_I$.
\item There exists a ring homomorphism $\Ph\colon\R[x]\to \R$ with $\Ph|_I=0$ such that \[ \ph(pq)=\Ph(p)\ph(q) \text{ for all } p\in \R[x]\text{ and }q\in I.\]
\end{enumerate}
It is easy to observe that every ring homomorphism $\Ph\colon\R[x]\to \R$ is given by a point evaluation, i.e., there exists $a\in \R^n$, such that $\Ph(p)=p(a)$ for all $p\in\R[x]$.
Therefore we find $a\in\R^n$ such that either
\begin{enumerate}[(I)]
\item $u(a)\ne0$, and $\ph(p)=\frac{1}{u(a)}p(a)$ for all $p\in I$, or
\item $u(a)=0$, and $\ph(pq)=p(a)\ph(q)$ for all $p\in \R[x]$ and $q\in I$.
\end{enumerate}
We claim that $a\in S(M)$. Indeed, let $p\in M$. We have $pu \in I\cap M$ due to (c). Then, (in both cases) we have $\ph(pu) = p(a)\ge 0$.

In case (I) we have $u(a)>0$ since $u\in M$ and $a\in S(M)$. This fact, together with (a) and (b), implies
$f(a)>0$, showing $\ph(f)=\frac{f(a)}{u(a)}>0$.
 
Finally, suppose that we are in the case (II). Then $\ph$ is a test state for $M$ at $a$ with respect to $u$, i.e., $\ph\in T(I,M,u)_a$.
Since $u,f\in I$, we can compute $\ph(fu)$ in two ways. First, we have $\ph(fu)=u(a)\ph(f)=0$. Also, $\ph(fu)=f(a)\ph(u)=f(a)$. Hence, $f(a)=0$, so $a\in Z(f)$, so that $a\in Z(f)\cap S(M)$. Since $\ph\in T(I,M,u)_a$, we must have that $\ph(f)>0$ by (e). 

So we arrive in both cases at the desired conclusion $\ph(f)>0$. Now applying Theorem \ref{theorem-cone}, we conclude the existence of $\ep>0$ such that
$f-\ep u\in M\cap I\subseteq M$.
\end{proof}

\begin{rem}\label{provesputinar}
If one sets $F:=\{1\}$ and $u:=1$ in Theorem \ref{main-pure-states}, then Condition (b) becomes $Z(f)\cap S(M)=\emptyset$ so that (e) is trivially satisfied, just as (c) and (d) are
in this case. Then Theorem \ref{main-pure-states} collapses to Putinar's theorem (Theorem \ref{putinar}).
\end{rem}

\begin{rem}\label{iremark}
Under the hypotheses of Theorem \ref{main-pure-states}, one sees easily that Condition (d) entails $Z(u)\cap S(M)\subseteq Z(p)\cap S(M)$ for all $p\in F$, so that
\begin{align}\label{realvariety}
S(M)\cap Z(u)\subseteq S(M)\cap\bigcap_{p\in F}Z(p)\overset{I=I(F)}=S(M)\cap\bigcap_{p\in I}Z(p)\overset{u\in I}\subseteq S(M)\cap Z(u).
\end{align}
It follows that every inclusion in \eqref{realvariety} can be replaced by an equality. In particular, $Z(u)$ and the real part of the affine variety defined by the ideal $I$
\[\bigcap_{p\in I}Z(p)\]
agree when intersected with $S(M)$. This gives some hint on how to choose $I$.
\end{rem}

Note that \eqref{realvariety} implies $S(M)\cap Z(u)\subseteq S(M)\cap Z(f)$ since $f\in I$ which the inclusion which is reverse to Condition (b) in
Theorem \ref{main-pure-states}. This is another way of showing \eqref{samezeroset} in the following remark.

\begin{rem}
Suppose that we are again in the situation of Theorem \ref{main-pure-states}.
From the conclusion of this theorem, we have that there exists $\ep >0$ such that $f-\ep u \in M$.
By Lemma \ref{prop-unit}, Condition (d) implies that $u$ is even $I$-stable contained in $M$. In particular, there is $\ep>0$ such that
$u-\ep f\in M$. Together this implies a geometric fact that is even
stronger than \eqref{samezeroset}, namely that there exists $\ep>0$ such that
\begin{align}\label{samebehaviour}
f\ge\ep u\text{ on }S(M)\qquad\text{and}\qquad u\ge\ep f\text{ on }S(M).
\end{align}
This implies in particular
\begin{align}\label{samezeroset}
Z:=Z(f)\cap S(M)=Z(u)\cap S(M).
\end{align}
Since $S(M)$ is compact (since it is closed and by Proposition \ref{archchar} bounded), \eqref{samebehaviour} is for any open set $U\subseteq\R^n$
containing $Z$ equivalent to
\begin{align}\label{nearzeros}
f\ge\ep u\text{ on }U\cap S(M)\qquad\text{and}\qquad u\ge\ep f\text{ on }U\cap S(M).
\end{align}
This means that in order for fulfilling the hypotheses (and thus the conclusion) of Theorem \ref{main-pure-states},
$f$ and $u$ not only need to have the same zeros on $S(M)$ but moreover have to behave essentially similar near their zeros on $S(M)$. This can give a good hint on how
choose $u$.
\end{rem}

Suppose we are given an Archimedean quadratic module
$M$ of $\R[x]$ and a polynomial $f\in\R[x]$ with $f\ge0$ on $S(M)$. We end this section by proposing a step-by-step
strategy by which one could try to prove $f\in M$ using Theorem~\ref{main-pure-states}:
\begin{enumerate}[Step 1.]
\item Find some $u\in M$ with $uM\subseteq M$ that has exactly the same zeros on $S(M)$ as $f$ and moreover behaves similar near these
zeros, that is, such that \eqref{nearzeros} holds. If $f$ is composed in a certain way from other polynomials which we call its \emph{constituents},
this could perhaps be done by carefully introducing new terms or modifying some terms in this composition that results in a different polynomial $u$ that for some
reason is known to be in $M$.
\item Identify an (often finite and small) nonempty set $F\subseteq \R[x]$ of polynomials such that $u$ is even $F$-stably contained in $M$. Often, $F$ will contain
certain constituents from Step 1. The bigger $F$ is, the bigger gets the ideal $I:=I(F)$ and the easier Condition (e) from Theorem \ref{main-pure-states} will be satisfied.
In any case, make sure that $f$ and $u$ lie both in $I$.
\item Now try to prove Condition (e) from Theorem~\ref{main-pure-states} by using the defining property of test states from Definition \ref{def-test-state}. This can often
be done by comparing what happens when the test state $\ph$ is applied to the respective expressions by which $f$ and $u$ are built from the constituents. The hope is that
the fact that $\ph(u)=1$ would imply positivity of $\ph$ on certain subexpressions (make sure they lie in the ideal $I$ in order for $\ph$ to being defined on them)
which in turn would imply $\ph(f)>0$. 
\end{enumerate}
The philosophy between Theorem~\ref{main-pure-states} could be described as follows to a general audience: You have to find a ``role-model element'' $u$ of $M$ that is
for some reason contained in $M$ and in fact even stably contained in some sense. If we have now a polynomial $f$
having similar geometry on $S(M)$ and passing a number of tests
on which the role-model $u$ does well, then $f$ is also an element of $M$. The tests are related to the zeros of $f$ (and at the same time of $u$) on $S(M)$
(which prevent the application of Putinar's theorem) but nevertheless can be of algebraic nature.

In the next two sections we will see concrete examples of how to apply Theorem~\ref{main-pure-states}. In some sense, they will indeed be very simple examples.
The set $F$ will be a
singleton in Section \ref{sec:5} and a two-element set in Section \ref{sec:stable}. We expect the theorem to have much deeper applications in the future.

\section{Certifying copositivity of matrices of size five}\label{sec:5}

This section is devoted to show Theorem \ref{reznick5cop}, namely that every copositive matrix of size $5$ is Reznick-certifiable. For this, we will use the following important result from \cite[Theorem 1.3]{lv3} that reduces Theorem \ref{reznick5cop} to showing that the positive diagonal scalings of the Horn matrix are Reznick-certifiable.

\begin{thm}[Laurent and Vargas]\label{theorem-reduction}
Equality $\COP_5=\bigcup_{r\ge0}\MK_5^{(r)}$ holds
if and only if for every positive definite diagonal matrix $D$, the matrix $DHD$ is Reznick-certifiable.
\end{thm}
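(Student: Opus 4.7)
\medskip

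The plan is to deduce Theorem~\ref{theorem-reduction} by combining Hildebrand's classification of the extreme rays of $\COP_5$ with the elementary observation that the union $\MK_5^{(\infty)}:=\bigcup_{r\ge0}\MK_5^{(r)}$ is itself a convex cone.

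The forward direction is immediate: if $\COP_5=\MK_5^{(\infty)}$, then since copositivity is preserved by positive diagonal scaling, $DHD\in\COP_5$ for every positive definite diagonal $D\in\SYM_5$, so $DHD\in\MK_5^{(\infty)}$, meaning $DHD$ is Reznick-certifiable.

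For the reverse direction, I would first verify that $\MK_5^{(\infty)}$ is a convex cone. The nesting $\MK_5^{(r)}\subseteq\MK_5^{(r+1)}$ follows because $\sum_{i=1}^nx_i^2\in\Si$ and $\Si\cdot\Si\subseteq\Si$, so the union is directed; closedness under addition and nonnegative scaling then reduces to the analogous properties of $\Si$ applied at a common level $r=\max(r_1,r_2)$. Next, I would recall that $\COP_5$ is a closed pointed convex cone in the finite-dimensional space $\SYM_5$ (closedness is clear from the definition; pointedness follows since $M,-M\in\COP_n$ forces $a^TMa=0$ for all $a\in\R_{\ge0}^n$, and testing on $e_i$ and $e_i+e_j$ gives $M=0$). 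Hence, by the Minkowski representation for closed pointed convex cones in finite dimension, every element of $\COP_5$ is a nonnegative combination of extreme rays.

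The central ingredient is then Hildebrand's classification \cite{hild}: up to conjugation by a permutation matrix, every extreme ray of $\COP_5$ is either (i) contained in $\MK_5^{(0)}$, (ii) a positive diagonal scaling of some matrix $T(\ps)$, or (iii) a positive diagonal scaling of the Horn matrix $H$. I would then note that both $\COP_n$ and $\MK_n^{(r)}$ are invariant under conjugation by permutation matrices, the latter because permuting variables preserves $\sum x_i^2$ and preserves $\Si$, so permutation conjugation can be ignored. Category (i) is by definition in $\MK_5^{(\infty)}$; category (ii) is known to be Reznick-certifiable by \cite[Theorem 2.3]{lv3}; category (iii) lies in $\MK_5^{(\infty)}$ precisely by the standing hypothesis. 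Combining with the convex cone property of $\MK_5^{(\infty)}$, every extreme ray of $\COP_5$ lies in $\MK_5^{(\infty)}$, and therefore $\COP_5\subseteq\MK_5^{(\infty)}$; the reverse inclusion is immediate from Reznick-certifiability implying copositivity.

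The main obstacle, in my view, is not logical but bibliographic: one must import Hildebrand's precise classification and verify that the Laurent--Vargas treatment of the $T(\ps)$ category really applies \emph{after} positive diagonal scaling, not just to the raw matrices $T(\ps)$. Everything else is soft convex-geometric bookkeeping, and the reduction goes through as long as the three categories exhaust the extreme rays up to permutation and the $T(\ps)$-category has already been handled.
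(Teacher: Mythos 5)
Your argument is correct and is essentially the route the paper itself sketches: the paper imports Theorem~\ref{theorem-reduction} from \cite[Theorem 1.3]{lv3} and, in its introduction, outlines exactly this reduction via Hildebrand's classification of the extreme rays of $\COP_5$, permutation invariance of both cones, the triviality of the first category, and the treatment of the diagonally scaled $T(\ps)$ matrices in \cite[Theorem 2.3]{lv3}. Your added bookkeeping (that $\bigcup_{r\ge0}\MK_5^{(r)}$ is a nested union and hence a convex cone, and that $\COP_5$ is generated by its extreme rays) fills in the soft steps correctly, and your caveat about the $T(\ps)$ category needing to be handled \emph{after} diagonal scaling is precisely the point the cited reference addresses.
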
 

In order to show that every positive diagonal scaling of the Horn matrix is Reznick-certifiable, we observe the following. If $d_1,\ldots, d_5\in\R_{>0}$ are the diagonal entries
of the diagonal matrix $D\in\SYM_5$, then
\begin{align*}
DHD \text{ is Reznick-certifiable } & \iff (x^{\circ2})^TDHDx^{\circ2} \in M_{\S^4}\\
						   & \iff (x^{\circ2})^THx^{\circ2} \in \Si  + I\left(\sum_{i=1}^5\frac{1}{d_i}x_i^2-1\right)
\end{align*}
where the first equivalence holds by Corollary \ref{equiv-cert} and the second one follows from the variable substitutions $x_i\mapsto\frac1{\sqrt{d_i}}x_i$ and
$x_i\mapsto\sqrt{d_i}{x_i}$.
This together with Theorem~\ref{theorem-reduction} reduces Theorem \ref{reznick5cop} to the following.

\begin{thm}\label{theorem-horn}
Let $d_1,\dots, d_5\in\R_{>0}$. Then
\[(x^{\circ2})^THx^{\circ2} \in \Si  + I\left(\sum_{i=1}^5d_ix_i^2-1\right).\]
Equivalently, every positive diagonal scaling of the Horn matrix is Reznick-certifiable.
\end{thm}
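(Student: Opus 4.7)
The plan is to apply Theorem \ref{main-pure-states} to the Archimedean quadratic module $M = \Si + I(\sum_{i=1}^5 d_i x_i^2 - 1)$ (Archimedean by Proposition \ref{archchar}), whose nonnegativity set $S(M) = E_d := \{x \in \R^5 : \sum_i d_i x_i^2 = 1\}$ is an ellipsoid, and to the polynomial $f = (x^{\circ 2})^T H x^{\circ 2}$. Condition (a), that $f \ge 0$ on $E_d$, is immediate from the copositivity of $H$.

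Following the strategy outlined at the end of Section \ref{sec:test}, take the singleton $F = \{f\}$, so that $I := I(F) = f \cdot \R[x]$ and $f \in I$ trivially, and set $u := (\sum_{i=1}^5 x_i^2)\, f$. By Parrilo's identity \cite[Page 68]{par}, $u \in \Si$; hence $u \in I \cap M$ and $uM \subseteq M$ (condition (c)). Since $\sum_i x_i^2 > 0$ on $E_d$, the zero sets of $u$ and $f$ on $E_d$ coincide (condition (b)). Condition (e) is quickly verified: for any $a \in Z(f) \cap E_d$ and any test state $\varphi \in T(I, M, u)_a$, the defining property of a test state, applied to $p := \sum x_i^2 \in \R[x]$ and $q := f \in I$, gives
\[
1 \;=\; \varphi(u) \;=\; \varphi\bigl((\sum x_i^2)\, f\bigr) \;=\; \bigl(\sum a_i^2\bigr)\, \varphi(f),
\]
so that $\varphi(f) = 1 / \sum_i a_i^2 > 0$, since $a \ne 0$ on $E_d$.

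The main obstacle is condition (d): one must exhibit $\varepsilon > 0$ such that $u \pm \varepsilon f = \bigl((\sum x_i^2) \pm \varepsilon\bigr) f \in M$. Since $\sum x_i^2 \pm \varepsilon > 0$ on $E_d$ for small $\varepsilon$, these products are nonnegative on $E_d$ but still vanish wherever $f$ does, so Putinar's theorem does not apply. Instead, one must produce explicit SOS-plus-ideal decompositions of $((\sum x_i^2) \pm \varepsilon) f$ by refining Parrilo's identity with carefully chosen multiples of $\sum d_i x_i^2 - 1$; here the specific algebraic structure of $f$ enters crucially. A key ingredient is the exact structural identity
\[
f \;=\; (x_1^2 - x_3^2 - x_4^2)^2 + (x_2^2 - x_5^2)^2 + 2 x_1^2 (x_2^2 + x_5^2) + 2 (x_3^2 - x_4^2)(x_2^2 - x_5^2),
\]
associated to the non-adjacent pair $\{2, 5\}$ of $C_5$, and its four cyclic variants; this isolates an SOS part and a ``crossing'' correction that vanishes on the corresponding Pythagorean zero stratum of $f$, and it is precisely these four identities that one must combine with the ellipsoid equation to build the required decompositions. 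Once (d) is verified, Theorem \ref{main-pure-states} yields $f \in M$, which is precisely Theorem \ref{theorem-horn}.
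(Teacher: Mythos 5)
Your setup coincides with the paper's in every choice: the same quadratic module $M=\Si+I(\sum_{i=1}^5d_ix_i^2-1)$, the same singleton $F=\{f\}$ with $I=f\,\R[x]$, the same unit $u=(\sum_{i=1}^5x_i^2)f$, and your verifications of conditions (a), (b), (c) and (e) of Theorem \ref{main-pure-states} match the paper's. The issue is that you correctly single out condition (d) as the only nontrivial point and then do not prove it. Saying that one ``must produce explicit SOS-plus-ideal decompositions of $((\sum_ix_i^2)\pm\ep)f$ by refining Parrilo's identity with carefully chosen multiples of $\sum_id_ix_i^2-1$'' restates the goal rather than achieving it; and although the structural identity you display is a correct decomposition of $f$ (it is one of the cyclic building blocks of Parrilo's certificate), it is never actually combined with the ellipsoid equation --- no $\ep$, no multiplier and no decomposition are exhibited. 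As written, the argument is incomplete precisely at the step that carries all the difficulty.

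The paper closes this gap with a short perturbation argument based on Lemma \ref{lemma-DHD} (that is, on \cite[Theorem 4]{lv2}): the weighted form $(\sum_{i=1}^5e_ix_i^2)f$ is a sum of squares whenever $e_{i-1}+e_{i+1}\ge e_i$ for all $i$ (indices mod $5$), in particular for every weight vector $e$ in a neighbourhood of the all-ones vector. Taking $e_i:=1\pm\ep d_i$ with $\ep>0$ small enough and writing
\[
\Bigl(\sum_{i=1}^5x_i^2\pm\ep\sum_{i=1}^5d_ix_i^2\Bigr)f
=\Bigl(\sum_{i=1}^5x_i^2\Bigr)f\pm\ep\Bigl(\sum_{i=1}^5d_ix_i^2-1\Bigr)f\pm\ep f\in\Si
\]
yields $u\pm\ep f\in\Si+I\bigl(\sum_{i=1}^5d_ix_i^2-1\bigr)=M$, which is exactly condition (d). The point you are missing is that no new decomposition of $((\sum_ix_i^2)\pm\ep)f$ is needed: it suffices to perturb the \emph{weights} inside the known family of certificates and then reduce modulo the ellipsoid ideal. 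If you insist on your route through the five cyclic identities, you would still have to convert them into the statement that the cone of admissible weight vectors has nonempty interior around $(1,\dots,1)$ --- which is precisely what Lemma \ref{lemma-DHD}, or the explicit decomposition displayed after it in the paper, provides.
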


To prove this, we will apply Theorem \ref{main-pure-states} in a special setting. We start with a preliminary result from \cite{lv2} that will be used in the proof of Theorem~\ref{theorem-horn}. This result was originally stated as a characterization of the diagonal scalings of the Horn matrix that belong to the cone $\MK_5^{(1)}$
\cite[Theorem 4]{lv2}. We use the following reformulation of it.

\begin{lem}\label{lemma-DHD}
Let $d_1,\ldots, d_5\in\R_{>0}$. Then
\[\left(\sum_{i=1}^{5}d_ix_i^2\right)(x^{\circ2})^T Hx^{\circ2} \in \Si\iff d_{i-1}+d_{i+1} \ge d_i \text{ for } i\in [5]\]
where the indices have to be understood modulo $5$.
\end{lem}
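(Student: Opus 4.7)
Both directions rest on an SDP reformulation of the SOS condition. Setting $y := x^{\circ2}$ and writing $G(y) := (\sum_{i=1}^5 d_i y_i)(y^T H y) \in \R[y]$, the polynomial $g(x) := G(x^{\circ2})$ is even in each variable $x_i$, and a standard parity argument shows that $g \in \Si$ if and only if
\[G(y) = \sum_{i=1}^5 y_i \, (y^T A_i y) + \sum_{|S| = 3} c_S \, y^S\]
for some matrices $A_1, \ldots, A_5 \in \PSD_5$ and some nonnegative scalars $c_S \ge 0$, indexed over the $3$-element subsets $S \subseteq [5]$. Matching coefficients of $y_i^3$ and $y_i^2 y_j$ on both sides forces $(A_i)_{ii} = d_i$ and $(A_i)_{ij} = d_i H_{ij}$ for $j \ne i$, so the $i$-th row and column of each $A_i$ is determined, while the off-row-and-column entries and the $c_S$ remain free.

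For the direction $(\Leftarrow)$, assuming the pentagon inequalities, the plan is to use the rank-$1$ ansatz $A_i := d_i v_i v_i^T$, where $v_i$ denotes the $i$-th column of $H$. This choice is manifestly PSD and has the correct $i$-th row and column. Matching the coefficients of the triple monomials $y_i y_j y_k$ in the decomposition above then gives an explicit closed-form expression for each $c_S$: a direct computation using the Horn sign pattern yields $c_S = 4(d_{i-1} + d_{i+1} - d_i)$ for the five consecutive triples $S = \{i-1, i, i+1\}$ (which is nonnegative by the pentagon inequalities), while $c_S = 0$ for the five remaining ``gap'' triples, thanks to the multiplicativity identity $H_{jk} = H_{ij} H_{ik}$ which happens to hold exactly on those triples.

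For $(\Rightarrow)$, the plan is to produce dual certificates. For each $i \in [5]$ I would seek a linear functional $L_i$ on the space of cubic polynomials in $y$ such that the symmetric matrices $(L_i(y_l y_p y_q))_{p, q}$ are PSD for every $l \in [5]$, the values $L_i(y^S)$ are nonnegative for every $3$-subset $S \subseteq [5]$, and $L_i(G) = c_i \cdot (d_{i-1} + d_{i+1} - d_i)$ for some $c_i > 0$. Applying such an $L_i$ to any feasible decomposition above would then force $d_{i-1} + d_{i+1} \ge d_i$. By the cyclic $\Z/5\Z$-symmetry of $H$, it suffices to construct $L_1$; a natural candidate is a signed combination of point evaluations at the Horn zero directions $e_j + e_{j+2}$ (where $G$ vanishes), possibly augmented with first-order derivative terms at those points. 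The main obstacle is carrying out this explicit construction, since the free entries $(A_i)_{jk}$ for $j, k \ne i$ give the primal SDP substantial flexibility, and $L_i$ must be tuned to neutralize this flexibility while still extracting the pentagon deficit $d_{i-1} + d_{i+1} - d_i$.
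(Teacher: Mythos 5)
Your ``$\Longleftarrow$'' direction is correct and is in substance identical to what the paper does: the rank-one Gram matrices $A_i=d_iv_iv_i^T$ produce exactly the squares $d_ix_i^2\bigl(v_i^Tx^{\circ2}\bigr)^2$ appearing in the paper's explicit identity, and your computed coefficients $4(d_{i-1}+d_{i+1}-d_i)$ for the consecutive triples (with the gap triples vanishing via $H_{jk}=H_{ij}H_{ik}$) match the paper's displayed decomposition term by term. One small inaccuracy: matching the coefficient of $y_i^2y_j$ gives $2(A_i)_{ij}+(A_j)_{ii}=2d_iH_{ij}+d_j$, which couples an entry in row $i$ of $A_i$ with an \emph{off-row-and-column} entry of $A_j$; so the rows and columns of the $A_i$ are \emph{not} individually forced. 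This is harmless for ``$\Longleftarrow$'' (one just checks directly that the rank-one ansatz satisfies these equations, using $H_{ij}^2=1$), but it removes the rigidity you might have hoped to lean on in the converse.

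The genuine gap is the ``$\Longrightarrow$'' direction: you describe a dual-certificate strategy but never construct the functionals $L_i$, and you yourself identify the construction as the main obstacle. As it stands, nothing in the proposal rules out that the free entries $(A_l)_{jk}$ and the freedom in distributing the $y_i^2y_j$ coefficients could yield a PSD Gram decomposition even when some pentagon inequality fails, so the equivalence is not established. You should be aware that the paper does not prove this direction either: it quotes the full equivalence from Laurent--Vargas \cite[Theorem 4]{lv2} and explicitly uses only the ``$\Longleftarrow$'' implication (which is all that is needed for Theorem \ref{theorem-horn}). So your proposal covers exactly the part the paper proves, by essentially the same decomposition; for a self-contained proof of the lemma as stated you would still have to carry out the dual construction (or reproduce the argument of \cite{lv2}), and the flexibility noted above means a naive combination of point evaluations at the zeros $e_j+e_{j+2}$ will need derivative information to pin down the deficit $d_{i-1}+d_{i+1}-d_i$.
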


We will just use the ``$\Longleftarrow$" part of  Lemma \ref{lemma-DHD} that also follows from the following explicit decomposition (which follows from \cite{lv2}
and generalizes the decomposition from \cite[Page 68]{par}):
\begin{align*}
(\sum_{i=1}^5d_ix_i^2)(x^{\circ2})^THx^{\circ2}=\ &d_1x_1^2(x_1^2+x_2^2+x_5^2-x_3^2-x_4^2)^2+\\
&d_2x_2^2(x_1^2+x_2^2+x_3^2-x_4^2-x_5^2)^2+\\
&d_3x_3^2(x_2^2+ x_3^2+x_4^2-x_5^2-x_1^2)^2+\\
&d_4x_4^2(x_3^2+x_4^2+x_5^2-x_1^2-x_2^2)^2+\\
&d_5x_5^2(x_1^2+x_4^2+x_5^2-x_2^2-x_3^2)^2+\\
&4x_1^2x_2^2x_5^2(d_5-d_1+d_2) + 4x_1^2x_2^2x_3^2(d_3+d_1-d_2)+\\
& 4x_2^2x_3^2x_4^2(d_4+d_2-d_3) + 4x_3^2x_4^2x_5^2(d_5+d_3-d_4)+\\
&4x_4^2x_5^2x_1^2(d_1+d_4-d_5).
\end{align*}
In particular, if $(d_1,\ldots, d_5)$ is sufficiently close to the all ones vector, then \[\left(\sum_{i=1}^{5}d_ix_i^2\right)(x^{\circ2})^T Hx^{\circ2}\] is a sum of squares.
This is exactly what we will use in the proof of Theorem \ref{theorem-horn} which now follows.

\begin{proof}[Proof of Theorem \ref{theorem-horn}]
Set $h:=(x^{\circ2})^THx^{\circ2}$. We will show $h\in \Si + I(\sum_{i=1}^5d_ix_i^2-1)$ by applying Theorem \ref{main-pure-states} in the following setting: 
\begin{itemize}
\item $F:=\{h\}$ generates the ideal $I:=\R[x]h$,
\item  $M := \Si + I\left(\sum_{i=1}^5d_ix_i^2-1\right)$,
\item $f:=h$, and
\item  $u:=\left(\sum_{i=1}^5x_i^2\right)h$.
\end{itemize}
In what follows we will show that this setting satisfies the hypotheses of Theorem~\ref{main-pure-states}, thus enabling us to conclude that $h \in M$, as desired.

 First,  we show that $M$ is Archimedean. We have $1-\sum_{i=1}^5d_ix_i^2\in M$. If we set $d:=\min\{d_i\mid i\in[5]\}$, then we have $1-\sum_{i=1}^5dx_i^2\in M$, so that $\frac{1}{d}-\sum_{i=1}^5x_i^2\in M$. Thus, for any $N>\frac{1}{d}$, we have $N-\sum_{i=1}^5x_i^2\in M$ so that Proposition \ref{archchar} applies.

Since $H$ is copositive we have that $h\ge0$ on $\R^5$. In particular, $h\ge 0$ on $S(M)$. 

Clearly, we have $Z(h) \cap S(M) \subseteq Z(u)\cap S(M)$,  and $uM\subseteq M$ holds as $u\in \Si$ (since $H\in \MK_5^{(1)}$).
 
We now show that $u$ is $F$-stably contained in $M$. By Lemma \ref{lemma-DHD}, the two
polynomials
\[
\left(\sum_{i=1}^5x_i^2\pm \ep \sum_{i=1}^5d_ix_i^2\right)h = \left(\sum_{i=1}^5x_i^2\right)h \pm \ep \left(\sum_{i=1}^5d_ix_i^2-1+1\right)h
\]
are sums of squares for some $\ep>0$ small enough. 
This implies that \[(\sum_{i=1}^5x_i^2)h\pm \ep h\in M\] as wished.

It only remains to show that for all $a\in Z(f)\cap S(M)$ and all test states $\ph\in T(I, M, u)_a$  we have $\ph(h)>0$.
But for such $a$ and $\ph$, we have
\[1=\ph(u)=\left(\sum_{i=1}^5a_i^2\right)\ph(h).\]
by the properties of a test state from Definition \ref{def-test-state}. This forces
$\ph(h)>0$ since $\sum_{i=1}^5a_i^2\ge 0$.
\end{proof}

\section{Finite convergence of the hierarchy computing the stability number}\label{sec:stable}

In this section, we show the second main application of this paper (Theorem~\ref{theorem-alpha}): Given a graph $G=([n],E)$, the hierarchy $\th^{(r)}(G)$ has finite convergence to $\al(G)$. In other words, for every graph $G$ the graph matrix $M_G$ is Reznick-certifiable, i.e., $(\sum_{i=1}^nx_i^2)^rf_G$ is a sum of squares for some $r\in \N_0$.

For any graph $G$ and any $i\notin G$, we denote by $G\oplus i$ the graph that arises from $G$ by adding an isolated node $i$. In other words, if $G=(V,E)$, then
\[G\oplus i:=(V\cup\{i\},E).\]

Since the empty graph is trivially Reznick-certifiable, it would of course be enough to prove that Reznick-certifiability is preserved under adding a node
\emph{together with edges edges connecting it to the existing nodes}.
We use the following much stronger result, which is a reformulation of a result from  from \cite[Proposition~4]{lv2}. It says that it is actually enough to prove that
Reznick-certifiability is preserved under adding an \emph{isolated} node.

\begin{lem}[Laurent and Vargas]\label{lemma-isolated}
Suppose that the following implication holds for all graphs $G=([n],E)$:
\begin{align*}
M_G \text{ is Reznick-certifiable} \implies M_{G\oplus(n+1)} \text{ is Reznick-certifiable}.
\end{align*}
Then, $M_G$ is Reznick-certifiable for all graphs $G=([n],E)$.
\end{lem}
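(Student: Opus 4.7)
The plan is to prove Lemma~\ref{lemma-isolated} by strong induction on the number of vertices $n$. The base case $n = 0$ is immediate since the empty graph has a vacuously Reznick-certifiable matrix. For the inductive step, given $G = ([n], E)$ with $n \geq 1$, I would split into two cases according to whether $G$ has an isolated vertex.

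If $G$ has at least one isolated vertex $v$, then up to relabeling the vertices, $G$ is isomorphic to $(G - v) \oplus (n)$, where $G - v$ denotes the induced subgraph on $[n]\setminus\{v\}$ and has $n - 1$ vertices. Since Reznick-certifiability is a property of isomorphism classes of matrices (it is preserved under conjugation by permutation matrices), we may apply the induction hypothesis to $G - v$ to conclude that $M_{G - v}$ is Reznick-certifiable, and then invoke the lemma's hypothesis applied to $G - v$ to deduce that $M_G$ is Reznick-certifiable.

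The case where $G$ has no isolated vertex is where the real difficulty lies. Here the induction cannot be closed from the stated hypothesis alone, because $G$ is not of the form $G' \oplus (n)$ for any graph $G'$ on $n-1$ vertices, so the hypothesis has no graph to be applied to. To handle this case, I would try to exhibit an auxiliary graph $G^*$ containing an isolated vertex together with a polynomial identity or substitution relating $f_G$ and $f_{G^*}$ that respects the sum-of-squares structure modulo powers of $\sum_{i=1}^{n} x_i^2$. Reznick-certifiability of $M_{G^*}$, obtained from the isolated-vertex case together with the induction hypothesis, would then transfer back to Reznick-certifiability of $M_G$.

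Constructing such an auxiliary graph and the accompanying polynomial identity is exactly the content of \cite[Proposition~4]{lv2} that is being reformulated here as Lemma~\ref{lemma-isolated}. I expect this to be the main obstacle: the simple part of the proof is the isolated-vertex induction, but its reduction of the general case to the isolated-vertex case relies on a non-obvious combinatorial-polynomial construction. Making this reduction explicit — identifying the right auxiliary $G^*$ and the corresponding manipulation of graph polynomials — is the step that cannot be avoided by purely structural induction, and is presumably where Laurent and Vargas's original argument does its real work.
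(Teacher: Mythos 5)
There is a genuine gap, and you have in effect located it yourself without closing it. Your induction on $n$ only succeeds when $G$ already has an isolated vertex: in that case $G\cong (G-v)\oplus v$, the induction hypothesis gives Reznick-certifiability of $M_{G-v}$, and the lemma's hypothesis (plus invariance under conjugation by permutation matrices) finishes. That part is fine but is also the trivial part. The entire content of the lemma is the reduction of an arbitrary graph to graphs with an isolated node, i.e.\ your second case, and there you write that you ``would try to exhibit an auxiliary graph $G^*$'' and then observe that constructing it ``is exactly the content of \cite[Proposition~4]{lv2} that is being reformulated here.'' Since the statement you are asked to prove \emph{is} that reformulation, appealing to it at this point is circular; as a standalone argument your proposal proves nothing beyond the easy direction. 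Note also that your sketch of the hard case is speculative in a way that matters: you would need $G^*$ to have an isolated vertex, to have at most $n$ vertices (so that the induction hypothesis applies to $G^*$ minus its isolated vertex), and to satisfy $f_G\in f_{G^*}+\Si$ modulo the relevant ideal. No such $G^*$ is exhibited, and the natural candidate (isolating a vertex $v$ of $G$, i.e.\ passing to $(G\setminus v)\oplus v$ on the same vertex set) only works when $v$ lies in \emph{every} maximum stable set, since otherwise $\al$ changes and the two graph polynomials are no longer comparable by adding squares; graphs such as $C_5$ have no such vertex and no isolated vertex, so your case analysis does not close.

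For context: the paper itself does not prove this lemma either --- it imports it from \cite[Proposition~4]{lv2}, which is why it is labelled ``[Laurent and Vargas].'' So citing that result is legitimate, but then the honest form of the ``proof'' is a one-line citation, not an induction whose essential case is left open. If you want a self-contained argument, you need the actual combinatorial reduction from \cite{lv2} (which rests on a nontrivial analysis of the graph polynomial, e.g.\ discarding non-critical edges via $f_G=f_{G\setminus e}+2\al\, x_i^2x_j^2$ and then treating the remaining structured graphs), not merely the observation that such a reduction must exist.
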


We will prove the following result, which combined with Lemma \ref{lemma-isolated} implies Theorem \ref{theorem-alpha}. This forms the main technical part of this section.

\begin{lem}\label{isolated-nodes}
Let $G=([n], E)$ be a graph. Suppose that $M_G$ is Reznick-certifiable, i.e., \[\left(\sum_{i=1}^nx_i^2\right)^rf_G\in\sum\R[x_1,\ldots,x_n]^2\] for some $r\in \N_0$.
Then, $M_{G\oplus(n+1)}$ is Reznick-certifiable, i.e., \[\left(\sum_{i=1}^{n+1}x_i^2\right)^rf_G\in\sum\R[x_1,\ldots,x_{n+1}]^2\] for some $r\in \N_0$.
\end{lem}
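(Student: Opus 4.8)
The plan is to reduce everything to one application of Theorem~\ref{main-pure-states} on the sphere. Write $\al:=\al(G)$; adding an isolated node raises the stability number by one, so $\al(G\oplus(n+1))=\al+1$, and we may assume $n\ge1$ (so $\al\ge1$), the case $n=0$ being trivial. Computing $M_{G\oplus(n+1)}$ from its definition — its top-left $n\times n$ block is $(\al+1)(A_G+I)-J$, its bottom-right entry is $\al$, and its remaining entries are $-1$ — and using the relation $(x^{\circ2})^T(A_G+I)x^{\circ2}=\tfrac1\al\big(f_G+(\sum_{i=1}^nx_i^2)^2\big)$ coming from $f_G=(x^{\circ2})^TM_Gx^{\circ2}$, one obtains the single identity that drives the whole argument:
\begin{align}\label{keyid}
\al\,f_{G\oplus(n+1)}=(\al+1)f_G+\Big(\sum_{i=1}^nx_i^2-\al x_{n+1}^2\Big)^2 .
\end{align}
By Corollary~\ref{equiv-cert}, $M_{G\oplus(n+1)}$ is Reznick-certifiable iff $f_{G\oplus(n+1)}\in M_{\S^n}$ in $\R[x_1,\dots,x_{n+1}]$, so this is what I would prove. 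By hypothesis $\sigma_0:=\big(\sum_{i=1}^nx_i^2\big)^rf_G\in\Si$ for some $r$ (still a sum of squares when viewed in $n+1$ variables); one may take $r\ge1$, as $r=0$ makes $\al f_{G\oplus(n+1)}$ itself a sum of squares via \eqref{keyid}.

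I would invoke Theorem~\ref{main-pure-states} with $M:=M_{\S^n}$ (Archimedean, $S(M)=\S^n$), with $f:=f_{G\oplus(n+1)}$ (nonnegative on $\R^{n+1}$, giving (a)), with
\[
u:=(\al+1)\sigma_0+\Big(\sum_{i=1}^nx_i^2-\al x_{n+1}^2\Big)^2\in\Si ,
\]
the ``sum-of-squares version'' of the right-hand side of \eqref{keyid} obtained by replacing the non-SOS constituent $f_G$ by its SOS multiple $\sigma_0$, and with $F:=\big\{f_G,\ \big(\sum_{i=1}^nx_i^2-\al x_{n+1}^2\big)^2\big\}$, so that $I:=I(F)$ contains $f_G$, $\sigma_0$, $u$ and, by \eqref{keyid}, $f_{G\oplus(n+1)}$. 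Condition (c), $uM\subseteq M$, is free because $u\in\Si$. For (b): on $\S^n$ one has $\sum_{i=1}^nx_i^2=1-x_{n+1}^2$, which is positive off the two poles, so $u=0$ on $\S^n$ forces $\sigma_0=0$ and $\sum_{i=1}^nx_i^2=\al x_{n+1}^2$, hence $f_G(x_1,\dots,x_n)=0$ and $\sum_{i=1}^nx_i^2=\al x_{n+1}^2$; by \eqref{keyid} this is exactly $Z(f_{G\oplus(n+1)})\cap\S^n$, so (b) holds with equality. One half of (d) is immediate: $u\pm\ep\big(\sum_ix_i^2-\al x_{n+1}^2\big)^2=(\al+1)\sigma_0+(1\pm\ep)\big(\sum_ix_i^2-\al x_{n+1}^2\big)^2\in\Si$ for $0<\ep<1$.

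Condition (e) is where the method pays off, and it is short. Let $a\in Z(f_{G\oplus(n+1)})\cap\S^n$ and $\ph\in T(I,M,u)_a$, and set $s:=\sum_{i=1}^na_i^2$. From \eqref{keyid} and $a\in\S^n$ we get $f_G(a_1,\dots,a_n)=0$ and $s=\al a_{n+1}^2$, hence $s=\tfrac{\al}{\al+1}\in(0,1)$. Since $\sigma_0=\big(\sum_ix_i^2\big)^r\!\cdot f_G\in\Si\cap I\subseteq M\cap I$, the test-state axioms give $0\le\ph(\sigma_0)=s^r\ph(f_G)$, so $\ph(f_G)\ge0$. Writing $p:=\big(\sum_ix_i^2-\al x_{n+1}^2\big)^2\in I$, the normalisation $\ph(u)=1$ reads $(\al+1)s^r\ph(f_G)+\ph(p)=1$, while applying $\ph$ to \eqref{keyid} yields $\al\,\ph(f_{G\oplus(n+1)})=(\al+1)\ph(f_G)+\ph(p)$; subtracting,
\[
\al\,\ph(f_{G\oplus(n+1)})=1+(\al+1)\big(1-s^r\big)\ph(f_G)\ \ge\ 1\ >\ 0 ,
\]
because $s^r<1$ and $\ph(f_G)\ge0$. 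Hence (e) holds, Theorem~\ref{main-pure-states} gives $f_{G\oplus(n+1)}\in M_{\S^n}$, and so $M_{G\oplus(n+1)}$ is Reznick-certifiable.

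The step I expect to be genuinely delicate is the remaining half of (d): that $u$ is $\{f_G\}$-stably contained in $M$, i.e. $u\pm\ep f_G=(\al+1)\sigma_0\pm\ep f_G+\big(\sum_ix_i^2-\al x_{n+1}^2\big)^2\in M_{\S^n}$ for some $\ep>0$. Here $\pm\ep f_G$ cannot be absorbed into a sum of squares, and $u\pm\ep f_G$ still has zeros on $\S^n$, so this is not a positivity statement; it requires using $\sigma_0\in\Si$ more carefully — rewriting $f_G$ modulo the sphere ideal via $1\equiv\sum_{i=1}^{n+1}x_i^2$, controlling the $x_{n+1}^2$-tail thereby produced, and offsetting it against $\big(\sum_ix_i^2-\al x_{n+1}^2\big)^2$ near the poles, where that tail dominates. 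This is the technical heart of the proof; everything else is bookkeeping around identity \eqref{keyid}.
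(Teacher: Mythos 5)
Your framework is the paper's own: the same key identity (the paper writes it as $f_{G\oplus(n+1)}=g^2+\frac{\al+1}{\al}f_G$ with $g:=\sqrt\al\,x_{n+1}^2-\frac1{\sqrt\al}\sum_{i=1}^nx_i^2$, which is your key identity divided by $\al$), the same quadratic module $M_{\S^n}$, the same ideal $I=I(\{f_G,g^2\})$, and essentially the same unit (the paper takes $u=g^2+\frac{\al+1}\al\big(\sum_{i=1}^nx_i^2\big)^{2r}f_G$, with exponent $2r$ rather than your $r$). Your verifications of (a), (b), (c), the easy half of (d), and (e) are correct; your computation for (e) is in fact a slightly cleaner version of the paper's.

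However, the step you defer is precisely the technical heart of the proof, and without it the argument is incomplete. The paper establishes $Nu\pm f_G\in M_{\S^n}$ by a nontrivial device: for the ``$+$'' case it sets $p:=c'(1-x_{n+1}^2)^r-1$ with $c':=(1-\frac1{\al+1})^{-r}$, observes that $p$ vanishes at $x_{n+1}=\pm\frac1{\sqrt{\al+1}}$ and is therefore divisible by $(\al+1)x_{n+1}^2-1$ (whose square is congruent to $\al g^2$ modulo the sphere ideal), writes $p=((\al+1)x_{n+1}^2-1)q$, invokes Archimedeanness to find $C$ with $C+q^2f_G\in M$, multiplies by $((\al+1)x_{n+1}^2-1)^2$ to get $C((\al+1)x_{n+1}^2-1)^2+p^2f_G\in M$, and finally expands $p^2f_G$ and absorbs the cross term $-2c'(1-x_{n+1}^2)^rf_G\equiv-2c'\big(\sum_{i=1}^nx_i^2\big)^rf_G$ using the hypothesis $\big(\sum_{i=1}^nx_i^2\big)^rf_G\in\Si$; this yields $\al Cg^2+c'^2\big(\sum_{i=1}^nx_i^2\big)^{2r}f_G+f_G\in M$, and the ``$-$'' case is analogous with $2r$ in place of $r$. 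Your sketched strategy (controlling the $x_{n+1}^2$-tail and offsetting it against $\big(\sum_ix_i^2-\al x_{n+1}^2\big)^2$ near the poles) points in the right direction but is not a proof: the crucial idea --- choosing the correction polynomial so that it is divisible by $(\al+1)x_{n+1}^2-1$, hence its square is a multiple of $g^2$ modulo the sphere, with the coefficient controlled by Archimedeanness --- is missing, and nothing you wrote substitutes for it. A further small point: with your exponent $r$ in $u$ instead of the paper's $2r$, converting $\al Cg^2+c'^2\big(\sum_ix_i^2\big)^{2r}f_G\pm f_G\in M$ into $Nu\pm f_G\in M$ requires one more application of Archimedeanness to dominate $\big(\sum_ix_i^2\big)^{2r}f_G$ by a multiple of $\big(\sum_ix_i^2\big)^{r}f_G$ modulo $M$.
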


\begin{proof}
By Corollary \ref{equiv-cert}, it suffices to show that
\begin{align*}
 f_{G\oplus(n+1)}\in M_{\S^n}=\sum\R[x_1,\ldots,x_{n+1}]^2+I\left(\sum_{i=1}^{n+1}x_i^2-1\right).
\end{align*}
Set $\al:=\al(G)$, so that $\al(G\oplus(n+1))=\al+1$. Observe that the following identity (which follows also from \cite[Section 3.2]{gl}) holds:
\begin{align}\label{relation-f-g}
 f_{G\oplus(n+1)}=g^2+\frac{\al+1}\al f_G, \text{ where } g:=\sqrt\al x_{n+1}^2-\frac1{\sqrt{\al}}(x_1^2+\ldots+x_n^2).
 \end{align} 
Indeed, we compare coefficients 
\begin{align*}
x_{n+1}^4&: \al= \al \\
x_i^4 \text{ for }(i\ne n+1)&: \al= \frac{1}{\al}+\frac{\al+1}{\al}\cdot(\al-1) \\
x_i^2x_j^2 \text{ for } \{i,j\}\in E&: 2\al= \frac{2}{\al} + \frac{\al+1}{\al}\cdot2(\al-1)\\
x_i^2x_j^2 \text{ for } \{i,j\}\notin E, i,j\ne n+1 &: -2=\frac{2}{\al}-2\cdot\frac{\al+1}{\al}\\
x_{n+1}^2x_i^2 \text{ for } i\ne n+1 &: -2 = -2\cdot\frac{\sqrt{\al}}{\sqrt{\al}}
\end{align*}
We apply Theorem \ref{main-pure-states} in the following setting to the polynomial ring $\R[x_1,\ldots,x_{n+1}]$:
\begin{itemize}
\item $F:= \{g^2,f_G\},$ 
\item $M:=M_{\S^n},$
\item $u:=\underbrace{g^2}_{=:u_1}+\underbrace{\frac{\al+1}\al\left(\sum_{i=1}^nx_i^2\right)^{2r}f_G}_{=:u_2}$, and
\item $f:=f_{G\oplus(n+1)}.$
\end{itemize}
Then, $M$ is Archimedean, $f\in I:=I(F)$ by the identity (\ref{relation-f-g}), and $u\in I$. We now verify the conditions (a)-(e) of Theorem \ref{main-pure-states}.

Clearly, $f_{G\oplus(n+1)}\ge 0$ on $S(M)$, because $f_{G\oplus(n+1)}$ is positive semidefinite (as $M_{G\oplus(n+1)}$ is copositive). Hence Condition (a) is verified.
Now, by looking at identity (\ref{relation-f-g}), if $a\in\R^{n+1}$ satisfies $f_{G\oplus(n+1)}(a)=0$, then $g^2(a)=0$ and $f_G(a)=0$. This implies 
\[Z(f_{G\oplus(n+1)})\subseteq Z(u),\]
and thus (b) holds.
The inclusion $uM\subseteq M$ holds as $u\in\sum\R[x_1,\ldots,x_{n+1}]^2$ by construction. This is condition (c).

Next, we show that $u$ is $F$-stably contained in $M$, which is (d). First, it is clear that $u\pm g^2$ is a sum of squares, so it belongs to $M$. It remains to prove that there exists $\ep>0$ such that $u\pm \ep f_G\in M$, which is equivalent to show that there exists $N>0$ such that $Nu\pm f_G\in M$. For this, we will show the following two statements.

\begin{enumerate}[(+)]
\item[(+)] There exist $N_1,N_2\in\N$ such that $N_1u_1+N_2u_2+f_G\in M$, 
\item[(-)] There exist $N_1,N_2\in\N$ such that $N_1u_1+N_2u_2-f_G\in M$,
\end{enumerate}
If this holds, then using that $u_1,u_2\in \Si\subseteq M$, we obtain that there exists $N\in \N$ such that $Nu\pm f_G\in M$, as desired.

For proving (+) and (-), let $\equiv$ denote congruence modulo the ideal \[I\left(\sum_{i=1}^{n+1}x_i^2-1\right),\] i.e., $p\equiv q$ means that the difference $p-q$ lies
in that ideal for $p,q\in\R[x_1,\ldots,x_{n+1}]$.
Observe that $p\in M$ if and only if $q\in M$ whenever $p\equiv q$. We have
\begin{align}
1-x_{n+1}^2&\equiv \sum_{i=1}^n x_i^2 \label{equiv-2}\quad\text{and}\\
g&\equiv \frac{1}{\sqrt{\al}}((\al+1)x_{n+1}^2-1). \label{equiv-1}
\end{align}

\textbf{Proof of (+).}
Consider the univariate polynomial \[p:=c'(1-x_{n+1}^2)^{r}-1\in\R[x_{n+1}],\] where $c':=(1-\frac{1}{\al+1})^{-r}$. Since $\pm \frac{1}{\sqrt{\al+1}}$ are roots of $p$,
\[(\al+1)x_{n+1}^2-1\] divides $p$ in $\R[x_{n+1}]$, so we can write 
\[p= ((\al+1)x_{n+1}^2-1)q\]
for some $q\in \R[x_{n+1}]$.  Since $M$ is Archimedean, using Definition \ref{def-qm}(c), there exists $C\in \N$ such that 
\[C+q^2f_G\in M.\]
Since $M$ is a quadratic module, it follows that
\[C((\al+1)x_{n+1}^2-1)^2 + p^2f_G =((\al+1)x_{n+1}^2-1)^2(C+q^2f_G)\in M.\]
Then, by using the definition of $p$, we obtain 
\[C((\al+1)x_{n+1}^2-1)^2 + c'^2(1-x_{n+1}^2)^{2r}f_G-2c'(1-x_{n+1}^2)^{r}f_G + f_G\in M.\]
Using (\ref{equiv-1}) and (\ref{equiv-2}) we obtain
\[\al Cg^2+c'^2\left(\sum_{i=1}^nx_i^2\right)^{2r}f_G -2c'\left(\sum_{i=1}^nx_i^2\right)^{r}f_G+f_G\in M.\]
By assumption, we have that $\left(\sum_{i=1}^nx_i^2\right)^{r}f_G\in \Si\subseteq M$ and thus 
\[\al Cg^2+c'^2\left(\sum_{i=1}^nx_i^2\right)^{2r}f_G +f_G\in M,\] 
which shows (+).

\bigskip
\textbf{Proof of (-).}
Consider the univariate polynomial \[p:=c'(1-x_{n+1}^2)^{2r}-1\in\R[x_{n+1}],\] where $c':=(1-\frac{1}{\al+1})^{-2r}$. Since $\pm \frac{1}{\sqrt{\al+1}}$ are roots of $p$,
\[(\al+1)x_{n+1}^2-1\] divides $p$ in $\R[x_{n+1}]$, so we can write 
\[p= ((\al+1)x_{n+1}^2-1)q\]
for some $q\in \R[x_{n+1}]$.  Since $M$ is Archimedean, there exists $C\in \N$ such that 
\[C-q^2f_G\in M.\]
Since $M$ is a quadratic module, it follows that
\[C((\al+1)x_{n+1}^2-1)^2 - p^2f_G = ((\al+1)x_{n+1}^2-1)^2(C-q^2f_G) \in M.\]
That is,
\[ C((\al+1)x_{n+1}^2-1)^2 - c'^2(1-x_{n+1}^2)^{4r}f_G+2c'(1-x_{n+1}^2)^{2r}f_G - f_G\in M.\]
Using (\ref{equiv-1}) and (\ref{equiv-2}), we obtain
\[ \al Cg^2- c'^2\left(\sum_{i=1}^nx_i^2\right)^{4r}f_G +2c'\left(\sum_{i=1}^nx_i^2\right)^{2r}f_G-f_G\in M. \]
By assumption, we have $\left(\sum_{i=1}^nx_i^2\right)^{r}f_G\in \Si$. This implies $(\sum_{i=1}^nx_i^2)^{4r}f_G\in \Si$. Hence, we have 
\[ \al Cg^2+2c'\left(\sum_{i=1}^nx_i^2\right)^{2r}f_G-f_G\in M,\]
which shows (-).

Hence we have verified Condition (d).

Finally, we check the test state property (e). Let $\ph$ be a test state on $I$ for $M$ at a point  $a\in Z(f_{G\oplus(n+1)})\cap S(M)$ with respect to $u$. Since $(\sum_{i=1}^nx_i^2)^{r}f_G\in M\cap I$, we have that 
\[0\le \ph\left(\left(\sum_{i=1}^nx_i^2\right)^{r}f_G\right)=\left(\sum_{i=1}^na_i^2\right)^{r}\ph(f_G),\]
where $a\in\S^n$ (recall Remark \ref{rem-support}) and $f_{G\oplus(n+1)}(a)=0$. It is easy to observe that $f_{G\oplus(n+1)}(0,\dots,0,\pm1)>0$, so that
$a\ne(0,\ldots,0,\pm1)$. This implies $\sum_{i=1}^na_i^2>0$, and thus $\ph(f_G)\ge 0$. Since $g^2\in I\cap M$, we have that $\ph(g^2)\ge 0$. Also, we have
\begin{align*}
1=\ph(u)&=\ph(g^2)+\frac{\al+1}{\al}\ph\left(\left(\sum_{i=1}^nx_i^2\right)^{2r}f_G\right)\\
&=\ph(g^2)+\frac{\al+1}{\al}\left(\sum_{i=1}^na_i^2\right)^{2r}\ph(f_G).
\end{align*}
Therefore, $\ph(g^2)$ and $\ph(f_G)$ are nonnegative but they cannot be both zero. Using relation (\ref{relation-f-g}), we obtain
\[ \ph(f_{G\oplus(n+1)})= \ph(g^2) + \frac{\al+1}{\al}\ph(f_G)>0,\]
as desired.
\end{proof}
\subsection*{Acknowledgements}
We thank Monique Laurent for her valuable comments about the presentation of this paper. This work is supported by the European Union's Framework Programme for Research and Innovation Horizon
2020 under the Marie Skłodowska-Curie Actions Grant Agreement No. 813211  (POEMA).

\end{document}